\newcommand{\be}{\begin{equation}}
\newcommand{\ee}{\end{equation}}
\newcommand{\bd}{\begin{displaymath}}
\newcommand{\ed}{\end{displaymath}}
\newcommand{\ba}{\begin{eqnarray}}
\newcommand{\ea}{\end{eqnarray}}
\def\R{{I \!\! R}}
\def\mD{ {\mathcal {D}}}
\def\f{\hat f}
\def\f{\varphi}
\def\e{\epsilon}
\def\v12{(v-w)}
\def\({\left(}
\def\){\right)}
\def\bgr#1\egr{{\allowdisplaybreaks\begin{gather}#1\end{gather}}}
\def\bma#1\ema{{\allowdisplaybreaks\begin{align}#1\end{align}}}
\def\oplem#1{\begin{lemma}\, {\rm #1}\, \it }
\def\cllem{\end{lemma}\rm \par }
\def\opthm#1{\begin{theorem}\, {\rm #1}\, \it }
\def\clthm{\end{theorem}\rm \par }
\def\N{\mathbb{N}}
\def\N{\mathbb{N}}
\def\R{\mathbb{R}}
\newcommand{\fer}[1]{(\ref{#1})}
\newcommand{\bq}{\begin{equation}}
\newcommand{\eq}{\end{equation}}
\def\bqa{\begin{eqnarray}}
\def\eqa{\end{eqnarray}}
\def\bd{\begin{displaymath}}
\def\ed{\end{displaymath}}
\renewcommand{\(}{\left(}
\renewcommand{\)}{\right)}
\def\hg{\widehat g}
\def\hf{\widehat f}
\def\ff{\widehat f}
\newtheorem{thm}{Theorem}
\newtheorem{cor}[thm]{Corollary}
\newtheorem{lem}[thm]{Lemma}
\theoremstyle{remark}
\theoremstyle{definition}
\newenvironment{equations}{\equation\aligned}{\endaligned\endequation}
\def\derpar#1#2{\frac{\partial#1}{\partial#2}}
\title{Entropy inequalities for stable densities\\ and strengthened central limit  theorems}
\author{Giuseppe Toscani\thanks{Department of Mathematics, University of Pavia, via Ferrata 1, 27100 Pavia, Italy.
 }}
\begin{document}





\maketitle

\begin{abstract}
We consider the central limit theorem for stable laws in the case of the standardized sum of independent and identically distributed random variables with regular probability density function. By  showing decay of different entropy functionals along the sequence we prove convergence with explicit rate in various norms to a L\'evy centered density of parameter $\lambda >1$ . This introduces a new information-theoretic approach to the central limit theorem for stable laws, in which the main argument is shown to be the relative fractional Fisher information, recently introduced in \cite{To-Fish}. In particular, it is proven that, with respect to the relative fractional Fisher information, the L\'evy density satisfies an analogous of the logarithmic Sobolev inequality, which allows to pass from the monotonicity and decay to zero of the relative fractional Fisher information in the standardized sum to the decay to zero in relative entropy with an explicit decay rate.
\end{abstract}

{\bf{keyword}}
Central limit theorem; Fractional calculus; Shannon entropy; Fisher information; Information inequalities; Stable laws.





\section{Introduction}

Let us consider random variables $X_j$'s which are independent copies of a centered random variable $X$ which lies in the domain of normal attraction of a random variable
$Z_\lambda$ with {\it L\'evy symmetric stable distribution $\omega$}. Then, the central limit theorem for stable laws implies that the law $f_n$ of the normalized sums
 \be\label{stab}
T_n = \frac 1{n^{1/\lambda}}\sum_{j=1}^n X_j
 \ee
converges weakly to the law  of the centered stable law $Z_\lambda$, as $n$ tends to
infinity \cite{Fel, GK, LR79}. 
The density $\omega$ of a L\'evy symmetric random variable $Z_\lambda$  of order $\lambda$ is explicitly expressed in Fourier transform by the formula
 \be\label{levy}
 \widehat \omega(\xi) = \e^{-|\xi|^\lambda}.
 \ee 
In a recent paper \cite{To-Fish}, in connection with the study of the monotonicity (with respect to $n$) of entropy functionals of the normalized sums \fer{stab} we  introduced the definition of relative (to $Z_\lambda$) fractional Fisher information of a random variable $X$.
For a given random variable  $X$ in the domain of attraction of $Z_\lambda$, with $1<\lambda<2$, the relative (to $Z_\lambda$) fractional Fisher information of $X$ is expressed by the formula
 \be\label{fish-stable}
I_\lambda(X| \, Z_\lambda) =  I_\lambda(f|\, \omega) = \int_{\{f>0\}} \left( \frac{\mD_{\lambda-1} f(x)}{f(x)} - \frac{\mD_{\lambda-1} \omega(x)}{\omega(x)} 
 \right)^2f(x)\, dx,
 \ee
 where $f$ and $\omega$ denote the densities of $X$ and $Z_\lambda$,  respectively, and $\mD_{\nu}f(x)$, $0<\nu<1$  is the fractional derivative of order $\nu$ of $f(x)$ (cf. the Appendix for the definition).  Note  that the relative fractional Fisher information is obtained from the classical one, expressed by
\be\label{rel-f}
 I(X|\, Z_\lambda ) = I(f|\, \omega) = \int_{\{f>0\}} \left(\frac{f'(x)}{f(x)} - \frac{\omega'(x)}{\omega(x)}\right)^2 f(x)\, dx,
 \ee
by substituting the standard derivative with the fractional derivative of order $\lambda -1$, which is such that $0<\lambda -1 < 1$ for $1 <\lambda <2$. 
 This nonlocal functional is based on a suitable modification of the  linear score function used in theoretical statistics. As the
linear score function $f'(X)/f(X)$ of a random variable $X$ with a (smooth) probability density $f$ identifies  Gaussian variables  as the
unique random variables for which the linear score is proportional to $X$ (i.e.$f'(X)/f(X) =CX)$ , L\'evy symmetric stable laws are now identified as the unique random variables $Y$ for which the new defined linear fractional score is proportional to $Y$ (cf. Section \ref{score}). Consequently, the relative (to $Z_\lambda$) fractional Fisher information \fer{fish-stable} can be equivalently written as
 \be\label{normal}
 I_\lambda(X| \, Z_\lambda) =  \int_{\{f>0\}} \left( \frac{\mD_{\lambda-1} f(x)}{f(x)} +\frac x\lambda  \right)^2f(x)\, dx,
 \ee

This analogy was pushed further to show that the relative fractional Fisher information \fer{fish-stable},  satisfies almost all properties of the classical relative Fisher information \fer{rel-f}.

The results in \cite{To-Fish},  include both monotonicity  of the relative fractional Fisher information along the sequence  $T_n$,  so that $I_\lambda(T_{n+1}|\, Z_\lambda) \le I_\lambda(T_n|\, Z_\lambda)$ for all $n \ge 1$, and an explicit rate of decay to zero
 \be\label{giu2}
I_\lambda(T_n|\, Z_\lambda) \le \left( \frac 1n \right)^{(2-\lambda)/\lambda} I_\lambda(X|\, Z_\lambda).
 \ee
These properties make evident that this new concept is quite useful to extract most of the statistical properties of the densities $f_n$ of the sequence $T_n$. 

However, at difference with what happens in the case of the classical central limit theorem, some relevant problems remained open. Among other questions,  both  the (eventual) monotonicity of the relative (to $Z_\lambda$) Shannon entropy, defined by
 \be\label{ren}
  H(X|\, Z_\lambda ) = H(f|\, \omega) = \int_{\R} f(x) \log \frac{f(x)}{\omega(x)}\, dx,
 \ee
along the sequence $f_n$  and  its decay to zero as $n$ converges towards infinity were left untouched.

At difference with the classical central limit theorem,  a prominent role is here played by the domain of attraction, which selects the laws of the random variable $X$ which eventually imply convergence of the law of the sum \fer{stab} towards $\omega$. The necessity to start sufficiently close to the target density, introduces additional difficulties in extending entropy arguments, and requires the development of various \emph{ad hoc} techniques,  usually based on the concept of relative entropy.  

Convergence to stable laws by means of the standard relative  Shannon entropy  \fer{ren} and  relative Fisher information \fer{rel-f} has been recently investigated by Bobkov,  Chistyakov  and  G\"otze   \cite{BCG-en, BCG3} (cf. also \cite{BCG1, BCG2}). The main result in \cite{BCG-en, BCG3}  was to show that, assuming the weak convergence of the normalized sums $T_n$ defined in \fer{stab} to a random variable $Z_\lambda$ with a non-extremal stable law with $0<\lambda <2$, then the relative Shannon entropy  $H(T_n|\, Z_\lambda)$ (respectively the Fisher information $I(T_n|\, Z_\lambda)$) converges to zero as time goes to infinity, if and only if $H(T_n|\, Z_\lambda) < +\infty$ (respectively $H(T_n|\, Z_\lambda) < +\infty$) for some $n >0$.  These results, however, do not contain information on the time decay of the relative Shannon entropy and the relative Fisher information. 

While the reading of \cite{BCG-en, BCG3} makes it clear that both the relative Fisher information \fer{rel-f} and the relative entropy \fer{ren} allow to get similar results, it appears also evident that the fractional Fisher information introduced in \cite{To-Fish} is more adapted to the study of the problem of convergence towards a stable law, in that it furnishes both  monotonicity along the sequence \fer{stab} and an explicit rate of convergence as in \fer{giu2}.  Hence, in view of the strongness of the results that can be obtained via the fractional Fisher information, it would be desirable to establish a connection of this new functional with the standard relative Shannon entropy.

To  give an answer to this question,  in analogy with the classical case, where the connection between relative entropy and relative Fisher information can be established by studying the time-evolution of the relative entropy of the solution to the Fokker--Planck equation \cite{AMTU, Tos3, Tos4}, in what follows we will investigate the time-evolution of the relative Shannon entropy along the solution of a suitable Fokker--Planck equation with fractional diffusion
 \be\label{FFPP}
\derpar{f}{t} = \derpar{\null}{x}\left(D_{\lambda-1} f + \frac x\lambda f\right) ,  
 \ee
where $1<\lambda < 2$, and the initial datum $\varphi(x)$ belongs to the domain of normal attraction of the L\'evy stable law $\omega$ of parameter $\lambda$, as given by \fer{levy}, which results to be a stationary solution of  equation \fer{FFPP}.  Fractional diffusion equations are well-studied, since they result quite useful in the description  of many physical processes, including turbulent flows \cite{Shl}, diffusion in complex systems \cite{Ott}, chaotic dynamics of classical conservative systems \cite{SZK93,KZB}, and
others. Also, mathematical aspects of fractional diffusion equations have been recently investigated from the point of view of mass transportation techniques in \cite{Erb}, and their connection with non-local kinetic equations of Boltzmann-type have been studied in \cite{FPTT}.

As we will see, this Fokker--Planck equation relates in a clear way the relative entropy to the relative fractional Fisher information, and allows to recover, similarly to what happens for the classical Fokker--Planck equation \cite{AMTU, Tos3, Tos4} an inequality similar to the classical logarithmic Sobolev inequality. The new inequality bounds the relative (to $Z_\lambda$) Shannon entropy in terms of the relative (to $Z_\lambda$) fractional Fisher information and the standard Fisher information  
 \be\label{Sobo1}
 H(X |\, Z_\lambda) \le \lambda \, 2^{1/\lambda} \, \min\{ I( X),\, I(Z_\lambda) \}^{1/2}\, I_\lambda(X|\, Z_\lambda)^{1/2}. 
 \ee
As for the Gaussian case, inequality \fer{Sobo1} allows to recover convergence results and rates of convergence for the relative Shannon entropy via convergence results and rates for the fractional Fisher information.  Inequality \fer{Sobo1} will be proven in Section \ref{entropies}. Then, we will pass from convergence in relative entropy to $L^1(\R)$-convergence. Indeed, convergence in relative entropy of the sequence $f_n$ to $\omega$ at the rate $t^{-\mu}$ implies convergence in $L^1(\R)$ of $f_n$ to $\omega$ at the sub-optimal rate $t^{-\mu/2}$ by Csiszar--Kullback inequality \cite{Csi, Kul}.

By using $L^1(\R)$ convergence, we will subsequently prove convergence in various Sobolev spaces at an explicit rate, which depends on the (increasing) regularity of $f_n$ as $n $ increases. This will be shown in Section \ref{regolare}. 

Our results are largely inspired by the treatment of the analogous problems in the case of the  the standard central limit theorem. There,  starting from the pioneering work of Linnik   \cite{Lin}, who first used Fisher information in a proof of the central limit theorem,  entropy functionals, in particular Shannon entropy and Fisher information,  have successfully been used to quantify the change in entropy as a result of convolution.   Let us briefly recall these results.

For $j  \in \N$ , $j \ge 1$ let the $X_j$'s be independent copies of a centered random variable $X$ with variance
$1$.  Then the (classical) central limit theorem implies that the law of the normalized
sums
 \[
S_n = \frac 1{\sqrt n}\sum_{j=1}^n X_j
 \]
converges weakly to the law of the centered standard Gaussian $Z$, as $n$ tends to infinity.

This result has a clear interpretation in terms of statistical mechanics.
Indeed, consider the entropy functional (or Shannon entropy) of a real valued random variable $X$
with density $f$,  defined as
 \be\label{Shan}
H(X) = H(f) = - \int_{\R} f(x) \log f(x)\, dx,
 \ee
provided that the integral makes sense. Among random variables with the same variance
$\sigma$ the standard Gaussian $Z$ with variance $\sigma$ has the largest entropy. This property suggests that the entropy could increase along the sequence $S_n$, in order to reach its maximal value. 
A direct consequence of the entropy power inequality, postulated by Shannon \cite{Sha}
in the fourthies, and subsequently proven by Stam  \cite{Sta} (cf. also Blachman
\cite{Bla}),  implies that $H(S_2)\ge H(S_1)$. The entropy of the normalized sum of two
independent copies of a random variable is larger than that of the original. A shorter
proof was obtained later by Lieb \cite{Lieb} (cf. also  \cite{Bar, Joh, JB} for
exhaustive presentation of the subject). While inductively expected that  the entire
sequence $H(S_n)$ should increase with $n$, as conjectured  by Lieb in $1978$
\cite{Lieb}, a rigorous  proof of this result was found only $25$ years later by
Artstein, Ball,  Barthe and A. Naor  \cite{ABBN1, ABBN2}.

Other simpler proofs of the monotonicity of the sequence $H(S_n)$ have been recently
obtained by Madiman and Barron \cite{MB, BM} and Tulino and Verd\'u \cite{TV}.

Most of the results about monotonicity  benefit from the reduction from entropy to another information-theoretic notion, the Fisher information of a random variable $X$ with a (smooth) density $f$, defined as
 \be\label{fish}
I(X) = I(f) = \int_{\{f>0\}} \frac{|f'(x)|^2}{f(x)} \, dx.
 \ee
Among random variables with the same variance $\sigma$, the Gaussian $Z$ has smallest Fisher
information $1/\sigma$.

Fisher information and entropy are related each other by the so-called
de Bruijn relation \cite{Bar, Sta}. If  $u(x,t)= u_t(x)$ denotes the solution to the initial value problem for the heat equation in the whole space $\R$,
 \be\label{heat}
 \frac{\partial u}{\partial t} =  \frac{\partial^2 u}{\partial x^2},
 \ee
 leaving from an initial probability density function $f(x)$,
 \be\label{deb}
 I(f) = \frac d{dt} H(u_t)|_{t=0}.
 \ee
A particularly clear explanation of this link is given in the article of Carlen and Soffer \cite{CS} (cf. also Barron \cite{Bar} and Brown \cite{Bro}).

De Bruijn equality first outlines that diffusion equations play an important role both in connecting entropy functionals, and in deriving  inequalities, a role which is nowadays well understood \cite{joint}. 

It is remarkable, however,  that convergence in relative (to the Gaussian $Z$) Fisher information with an explicit rate (the analogous of formula \fer{giu2}) has been shown to hold only for random variables $X$ with an absolutely continuous density with finite restricted Poincar\'e constant \cite{JB}.

In more details, in Section \ref{score} we will recall the main properties of the standard and fractional Fisher information, with a short explanation of the results obtained in \cite{To-Fish}. Then the connection between the relative Shannon entropy and the relative fractional Fisher information via the study of the entropy decay of the solution to the   fractional Fokker--Planck equation \fer{FFPP} will be developed in Section \ref{entropies}. Last, further results about regularity and convergence of the sequence $T_n$ defined in \fer{stab} will be studied in Section \ref{regolare}. We postpone to an Appendix the principal facts about fractional derivatives, functional spaces and some well-known densities that belong to the domain of attraction of L\'evy density.

\section{Fisher and fractional Fisher information}\label{score}

In the rest of this paper, if not explicitly quoted, and without loss of generality,
we will always assume that any random variable $X$ we will consider is centered, i.e.
$E(X)=0$, where as usual $E(\cdot)$ denotes mathematical expectation. We will start this section by recalling various well-known properties about the classical Fisher and relative Fisher information which will be used in the rest of this paper. Further results are collected in \cite{BCG1}.

The change of Fisher information when applied to convolutions is quantified by the well-known Stam's Fisher information inequality \cite{Sta}, which gives a
lower bound on the inverse of Fisher information of the sum
of independent random variables with (smooth) densities
 \be\label{f-stam}
 \frac 1{I(X+Y)} \ge \frac 1{I(X)} + \frac 1{I(Y)},
 \ee
with equality if and only X and Y are Gaussian random variables with proportional variances. 
A direct consequence of this inequality is that
 \be\label{min2}
 I(X+Y) \le \min \{I(X), I(Y)\},
 \ee 
if $X$ and $Y$ are independent random variables. 

Fisher information can also be described in the language of theoretical statistics. There, the score or efficient score \cite{CH, BM} is the
derivative, with respect to some parameter $\theta$, of the logarithm of the
likelihood function (the log-likelihood). If the observation is $X$ and its likelihood
is $L(\theta;X)$, then the score $\rho_L(X)$ can be found through the chain rule
 \be
\rho_L(\theta, X) = \frac{1}{L(\theta;X)} \frac{\partial L(\theta;X)}{\partial\theta}.
\ee Thus the score indicates the sensitivity of $L(\theta;X)$ (its derivative
normalized by its value). In older literature, the term \emph{linear score} refers to
the score with respect to an infinitesimal translation of a given density. In this
case, the likelihood of an observation is given by a density of the form
$L(\theta;X)=f(X+\theta)$. According to this definition, given a random variable $X$
in $\R$ distributed with a differentiable probability density function $f(x)$, its
linear score $\rho$ (at $\theta=0$) is given by
 \be\label{sco1}
\rho(X) = \frac{f'(X)}{f(X)}.
 \ee
The linear score has zero mean, and its variance is just the Fisher information
\fer{fish} of $X$.

Also, the notion of relative score has been recently considered in information theory
\cite{Guo} (cf. also \cite{BCG3}).  For every pair of random variables $X$ and $Y$
with differentiable density functions $f$ (respectively $g$), the score function of
the pair relative to $X$ is represented by
 \be\label{rel-sco}
\tilde\rho(X) = \frac{f'(X)}{f(X)} - \frac{g'(X)}{g(X)}.
 \ee
In this case, the relative (to $X$) Fisher information between $X$ and $Y$ is just the
variance of $\tilde\rho(X)$. This notion  is satisfying because it represents the
variance of some error due to the mismatch between the prior distribution $f$ supplied
to the estimator and the actual distribution $g$. Obviously, whenever $f$ and $g$ are
identical, then the relative Fisher information is equal to zero.

The relative Fisher information has been recently used in entropic proofs of the central limit theorem both in the classical case \cite{BCG2}, and in the case of stable laws \cite{BCG3}. 

In the classical case the relative (to the Gaussian) Fisher information takes a simple form, in view of the properties of the Gaussian density. Indeed, let $\omega_\sigma(x)$ denote the Gaussian density in $\R$ with zero mean and variance
$\sigma$
 \be\label{max}
\omega_\sigma(x) = \frac 1{\sqrt{2\pi \sigma}}\exp\left(- \frac{|x|^2}{2\sigma}\right).
 \ee
Then a Gaussian random variable of density $z_\sigma$ is uniquely defined by a linear score
function
 \[
 \rho(Z_\sigma) = - Z_\sigma/\sigma.
 \]
Also, the relative (to $X$) score function of $X$ and $Z_\sigma$ takes the simple
expression
 \be\label{rel-z}
\tilde\rho(X) = \frac{f'(X)}{f(X)} + \frac X\sigma,
 \ee
which induces a (relative to the Gaussian) Fisher information
 \be\label{fish-r}
\tilde I(X) = \tilde I(f) = \int_{\{f>0\}} \left( \frac{f'(x)}{f(x)} + \frac x\sigma
\right)^2f(x)\, dx.
 \ee
Clearly, $\tilde I(X) \ge 0$, while $\tilde I(X) = 0$ if $X$ is a centered Gaussian
variable of variance $\sigma$.

Having in mind this expression, the concept of Fisher information has been extended in \cite{To-Fish} to cover fractional derivatives (cf. the Appendix for the definition).  Given a random variable $X$ in $\R$ distributed with
a probability density  function $f(x)$ that has  a well-defined  fractional derivative
of order $\nu$, with $0<\nu < 1$, the linear fractional score, denoted by
$\rho_{1+\nu}$ is given by
 \be\label{sco2}
\rho_{1+\nu}(X) = \frac{\mD_\nu f(X)}{f(X)}.
 \ee
Thus the linear fractional score indicates the non local (fractional) sensitivity of
$f(X+\theta)$ at $\theta =0$ (its fractional derivative normalized by its value).
Analogously to the classical case,  the linearity of the fractional score of $X$ identifies $X$ as a L\'evy distribution of order $1+ \nu$ (cf. the Appendix). In fact,  the L\'evy random
variable of parameter $\lambda$ defined in \fer{levy}, with $1<\lambda<2$, is uniquely defined by the linear fractional score function
 \be\label{zz}
 \rho_\lambda (Z_{\lambda}) = -\frac{Z_{\lambda}}{\lambda}.
 \ee
It is important to remark that, at difference with the case of the standard linear score, the variance of the fractional score is in general
unbounded. One can easily realize this by looking at the variance of the
fractional score in the case of a L\'evy variable. For a L\'evy variable, in fact, the
variance of the fractional score coincides with a multiple of its variance, which is
unbounded \cite{GK,LR79}. For this reason, a consistent definition in this case is
represented by the relative fractional score. 

The relative (to $Z_\lambda$) fractional score function of $X$  assumes the
simple expression
 \be\label{rel-z2}
\tilde\rho_\lambda(X) = \frac{\mD_{\lambda-1} f(X)}{f(X)} + \frac X{\lambda},
 \ee
which induces a (relative to the L\'evy) fractional Fisher information (in short
$\lambda$-Fisher relative information)
 \be\label{fish-r2}
I_\lambda(X) =  I_\lambda(f) = \int_{\{f>0\}} \left( \frac{\mD_{\lambda-1} f(x)}{f(x)} +
\frac x\lambda \right)^2f(x)\, dx.
 \ee
The fractional Fisher information is always greater or equal than zero, and it is
equal to zero if and only if $X$ is a L\'evy symmetric stable distribution of order
$\lambda$. We remark that, due to the fat tails of the L\'evy density, $I_\lambda(X)$ is bounded only if the random variable $X$ has a
probability density function which is suitably closed to the L\'evy stable law
(typically lies in a subset of the domain of attraction). 

We will define by $\mathcal
P_\lambda$ the set of probability density functions such that $I_\lambda(f) <
+\infty$, and we will say that a random variable $X$ lies in the domain of attraction
of the $\lambda$-Fisher information if $I_\lambda(X) < +\infty$. More in general, for
a given positive constant $\upsilon$, one could consider other relative fractional score
functions given by
 \be\label{rel-z3}
\tilde\rho_{\lambda, \upsilon}(X) = \frac{\mD_{\lambda-1} f(X)}{f(X)} + \frac
X{\lambda\upsilon}.
 \ee
This leads to the relative fractional Fisher information
 \be\label{fish-r3}
 I_{\lambda,\upsilon}(X) =  I_{\lambda,\upsilon}(f) = \int_{\{f>0\}} \left( \frac{\mD_{\lambda-1}
f(x)}{f(x)} + \frac x{\lambda\upsilon} \right)^2f(x)\, dx.
 \ee
Clearly,  $I_\lambda= I_{\lambda, 1}$. Analogously, we will define by $\mathcal
P_{\lambda,\upsilon}$ the set of probability density functions such that
$I_{\lambda,\upsilon}(f) < +\infty$, and we will say that a random variable $X$ lies
in the domain of attraction  if $I_{\lambda, \upsilon}(X) < +\infty$.

Note that the relative Fisher information $I_\lambda(X)$ and $I_{\lambda,\upsilon}(X)$ are related each other.  Indeed, for any given random variable $X$ such that one of the two sides is
bounded, and positive constant $\upsilon$, the following identity holds
 \be\label{scal}
 I_{\lambda,\upsilon}(\upsilon^{1/\lambda}X) = \upsilon^{-2(1-1/\lambda)} I_\lambda
 \left(X\right).
 \ee
The domain of attraction of
the relative fractional Fisher information is not empty, and contains probability densities which belong to the domain of attraction of $Z_\lambda$. As shown in the Appendix of \cite{To-Fish}, {\it Linnik distribution} \cite{L, L2} belongs to $\mathcal
P_\lambda$. Linnik distribution is  expressed in Fourier variable by
  \be\label{Max-f}
  \widehat p_\lambda(\xi) = \frac 1{1+ |\xi|^\lambda}.
  \ee
For all $0<\lambda \leq 2$,  the function  \fer{Max-f} is the
characteristic function of a symmetric probability distribution. In addition, when
$\lambda > 1$, $\widehat p_\lambda \in L^1(\R)$, which, by applying the
inversion formula, shows that $p_\lambda$ is a probability density function.
(cf. Kotz and Ostrovskii \cite{KO}).

The main properties of the relative fractional Fisher information have been obtained in \cite{To-Fish}.  Most of these properties are
analogous of the ones proven in \cite{BM} for the standard linear
score function and the classical Fisher information.  

The main result in \cite{To-Fish} is the fractional version of the Blachman--Stam inequality \fer{f-stam} \cite{Bla, Sta}, which allows to bound the relative fractional Fisher information of the sum of
independent variables in terms of the relative fractional Fisher information of its
addends. 

\begin{thm}\label{bl} Let $X_j$, $j_1,2$ be independent random variables such that their relative
fractional Fisher information  functions $I_\lambda(X_j)$, $j=1,2$ are bounded for
some $\lambda$, with $1<\lambda <2$. Then, for each constant $\varepsilon$ with $0<\varepsilon <
1$,  $I_\lambda(\varepsilon^{1/\lambda}X_1+(1-\varepsilon)^{1/\lambda}X_2) $ is bounded, and
 \be\label{BS1}
I_\lambda (\varepsilon^{1/\lambda}X_1+(1-\varepsilon)^{1/\lambda}X_2)\le \varepsilon^{2/\lambda}
I_\lambda\left(X_1\right) + (1- \varepsilon)^{2/\lambda} I_\lambda\left( X_2\right).
 \ee
Moreover, there is equality in \fer{BS1} if and only if,
up to translation, both $X_j$, $j=1,2$ are L\'evy variables of exponent $\lambda$.
\end{thm}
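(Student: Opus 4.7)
The plan is to adapt the classical Blachman--Stam argument to the fractional setting, exploiting the compatibility of $\mD_{\lambda-1}$ with both convolution and scaling. Let $f_1,f_2$ be the densities of $X_1,X_2$, and denote by $\tilde f_j$ the density of the rescaled variable (so $\tilde f_1$ is the density of $\varepsilon^{1/\lambda}X_1$ and $\tilde f_2$ that of $(1-\varepsilon)^{1/\lambda}X_2$). Then the density of $Y=\varepsilon^{1/\lambda}X_1+(1-\varepsilon)^{1/\lambda}X_2$ is $f=\tilde f_1\ast\tilde f_2$. I would first record two elementary facts: the scaling identity $\mD_{\lambda-1}[f(a\cdot)](x)=a^{\lambda-1}(\mD_{\lambda-1}f)(ax)$ for $a>0$, and the commutation with convolution $\mD_{\lambda-1}(g\ast h)=(\mD_{\lambda-1}g)\ast h=g\ast(\mD_{\lambda-1}h)$. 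Together these yield the projection identities
\[
\frac{\mD_{\lambda-1}f(y)}{f(y)} \,=\, \varepsilon^{-(\lambda-1)/\lambda}\,E\!\left[\frac{\mD_{\lambda-1}f_1(X_1)}{f_1(X_1)}\,\Big|\,Y=y\right] \,=\, (1-\varepsilon)^{-(\lambda-1)/\lambda}\,E\!\left[\frac{\mD_{\lambda-1}f_2(X_2)}{f_2(X_2)}\,\Big|\,Y=y\right].
\]

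Taking the convex combination of the two expressions above with weights $w_1=\varepsilon$ and $w_2=1-\varepsilon$ reduces the prefactors to $\varepsilon^{1/\lambda}$ and $(1-\varepsilon)^{1/\lambda}$. Coupled with the trivial identity $y/\lambda=\varepsilon^{1/\lambda}E[X_1/\lambda\mid Y=y]+(1-\varepsilon)^{1/\lambda}E[X_2/\lambda\mid Y=y]$ for the linear part of the relative fractional score \fer{rel-z2}, this gives
\[
\tilde\rho_\lambda(Y)\big|_{Y=y} \,=\, E\!\left[\varepsilon^{1/\lambda}\,\tilde\rho_\lambda(X_1)+(1-\varepsilon)^{1/\lambda}\,\tilde\rho_\lambda(X_2)\,\Big|\,Y=y\right].
\]
Squaring, applying the conditional Jensen inequality, integrating against $f(y)\,dy$, and using independence of $X_1,X_2$ together with the mean-zero property $E[\tilde\rho_\lambda(X_j)]=0$ (which follows from centering of $X_j$ and from $\int\mD_{\lambda-1}f_j\,dx=0$) makes the cross term vanish and produces exactly \fer{BS1}. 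The boundedness of $I_\lambda(\varepsilon^{1/\lambda}X_1+(1-\varepsilon)^{1/\lambda}X_2)$ drops out of the same estimate.

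For the equality case, saturation of Jensen forces $\varepsilon^{1/\lambda}\tilde\rho_\lambda(X_1)+(1-\varepsilon)^{1/\lambda}\tilde\rho_\lambda(X_2)$ to be, almost surely, a function of $Y$ alone; by independence this is only possible when each $\tilde\rho_\lambda(X_j)$ is itself an affine function of $X_j$, and the characterization of L\'evy densities through the linear fractional score \fer{zz} then identifies each $X_j$ as a translated L\'evy variable of exponent $\lambda$.

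The step I expect to be the main obstacle is the rigorous justification of the projection identity under the bare assumption $I_\lambda(X_j)<+\infty$: namely, the commutation of $\mD_{\lambda-1}$ with convolution, the vanishing of $\int\mD_{\lambda-1}f_j\,dx$, and the $L^2$-existence of the conditional expectations. These are delicate because the fractional derivative is nonlocal and the densities in the domain of attraction typically carry heavy tails, but they should follow from standard Fourier-analytic arguments together with the characterization of $\mD_{\lambda-1}$ as a Fourier multiplier recalled in the Appendix.
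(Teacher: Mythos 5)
Your argument is correct and follows essentially the same route as the source: the paper states Theorem \ref{bl} without proof, quoting it from \cite{To-Fish}, where it is established by exactly this projection identity for the fractional score of a scaled convolution followed by the conditional-Jensen/variance-drop step with weights $\varepsilon$ and $1-\varepsilon$, the cross term vanishing because the relative fractional scores are centered and independent. The technical points you flag (commutation of $\mD_{\lambda-1}$ with convolution and dilation, the vanishing of $\int_\R \mD_{\lambda-1}f_j\,dx$, and the rigidity step in the equality case) are precisely the ones handled there via the Fourier multiplier characterization \fer{d1}.
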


An important consequence of Theorem \ref{bl}, which will be widely used in the rest of the paper, is concerned with the form that takes inequality \fer{BS1} when one of the two variables involved is a L\'evy variable. For any given positive constant
$\varepsilon <1$, and random variable $X$ with density function $f$ let us denote by
$f_\varepsilon$ the density of the random variable $X_\varepsilon = (1-\varepsilon)^{1/\lambda}X + \varepsilon^{1/\lambda}Z$, where the symmetric stable L\'evy
variable $Z$ of order $\lambda$ is independent of $X$. Then \fer{BS1} takes the form
 \be\label{ep}
 I_{\lambda}(X_\varepsilon) \le (1-\varepsilon)^{2/\lambda} I_\lambda \left(X\right).
 \ee

In other words, the relative fractional Fisher information of the smoothed version $X_\varepsilon$
of the random variable  is always smaller than the relative fractional Fisher
information of $X$. Moreover,
 \[
 \lim_{\varepsilon \to 0} I_{\lambda}(X_\varepsilon) = I_\lambda \left(X\right).
 \]
A further result for the relative fractional Fisher information refers to its monotonicity along the normalized sums in the central limit theorem for stable laws. It holds \cite{To-Fish}

\begin{thm}\label{main3} Let $T_n$ denote the sum \fer{stab}, where the random variables $X_j$
are independent copies of a centered random variable $X$ with bounded relative
$\lambda$-Fisher information, $1<\lambda<2$. Then, for each $n >1$, the relative
$\lambda$-Fisher information of $T_n$ is decreasing in $n$, and the following bound
holds
 \be\label{mm}
I_\lambda\left(T_n \right)\le \left(\frac{n-1}n
\right)^{(2-\lambda)/\lambda}I_\lambda\left(T_{n-1} \right).
 \ee
\end{thm}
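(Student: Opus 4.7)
\medskip
\noindent
\textbf{Proof plan.} The plan is to combine the fractional Blachman--Stam inequality (Theorem \ref{bl}) with an exchangeability argument in the spirit of Artstein--Ball--Barthe--Naor. For each $j\in\{1,\ldots,n\}$ we write the leave-one-out decomposition
\[
T_n = \left(\tfrac{n-1}{n}\right)^{1/\lambda}\! T^{(j)} + \left(\tfrac{1}{n}\right)^{1/\lambda}\! X_j,\qquad T^{(j)} := (n-1)^{-1/\lambda}\sum_{k\neq j} X_k,
\]
in which $T^{(j)}\sim T_{n-1}$ is independent of $X_j$. A direct application of Theorem \ref{bl} with $\varepsilon=(n-1)/n$ yields only
\[
I_\lambda(T_n)\le \left(\tfrac{n-1}{n}\right)^{2/\lambda} I_\lambda(T_{n-1})+n^{-2/\lambda}\, I_\lambda(X),
\]
which is too weak, since $I_\lambda(X)$ can dominate $I_\lambda(T_{n-1})$ and the target rate $((n-1)/n)^{(2-\lambda)/\lambda}$ cannot be read off.

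To absorb the $I_\lambda(X)$ term I would use the following projection identity for the relative fractional score of a convolution: if $W=aU+bV$ with $U,V$ independent and $a^\lambda+b^\lambda=1$, then a short Fourier-side computation (using $\widehat{\mD_{\lambda-1}f}(\xi)=|\xi|^{\lambda-1}\widehat f(\xi)$ and $\widehat f_{aU+bV}(\xi)=\widehat f_U(a\xi)\widehat f_V(b\xi)$) gives
\[
\tilde\rho_{\lambda,W}(W)=a\, E\bigl[\tilde\rho_{\lambda,U}(U)\bigm| W\bigr]+b\, E\bigl[\tilde\rho_{\lambda,V}(V)\bigm| W\bigr].
\]
Applied to the uniform splitting $T_n=n^{-1/\lambda}\sum_j X_j$ and to each leave-one-out splitting (averaged over $j$), one can eliminate the $\sum_j \tilde\rho_{\lambda,X_j}(X_j)$ contribution between the two resulting expressions for $\tilde\rho_{\lambda,T_n}(T_n)$ and obtain the clean representation
\[
\tilde\rho_{\lambda,T_n}(T_n)=\left(\tfrac{n}{n-1}\right)^{(\lambda-1)/\lambda}\! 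E\!\left[\frac{1}{n}\sum_{j=1}^n \tilde\rho_{\lambda,T^{(j)}}(T^{(j)})\,\Big|\,T_n\right].
\]
Squaring, applying Jensen's inequality and using exchangeability then reduce the theorem to an estimate of the cross-term $C:=E\bigl[\tilde\rho_{\lambda,T^{(j)}}(T^{(j)})\,\tilde\rho_{\lambda,T^{(k)}}(T^{(k)})\bigr]$ for $j\neq k$.

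The main obstacle is precisely this cross-term estimate: a bookkeeping of the $\lambda$-exponents shows that the claim is equivalent to $C\le \tfrac{n-2}{n-1}\,I_\lambda(T_{n-1})$. A bare Cauchy--Schwarz only gives the useless bound $C\le I_\lambda(T_{n-1})$, so one must apply the projection identity once more to each of $T^{(j)}$ and $T^{(k)}$, expanding both scores as weighted conditional expectations of the individual scores $\tilde\rho_{\lambda,X_i}(X_i)$. The centering $E[\tilde\rho_{\lambda,X_i}]=0$ together with the mutual independence of the $X_i$'s then kills all off-diagonal contributions, leaving only the diagonal part coming from the $n-2$ summands common to $T^{(j)}$ and $T^{(k)}$; this supplies exactly the factor $(n-2)/(n-1)$ and, after substitution, produces the sharp bound $I_\lambda(T_n)\le \bigl((n-1)/n\bigr)^{(2-\lambda)/\lambda}\,I_\lambda(T_{n-1})$.
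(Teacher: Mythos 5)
This paper does not actually prove Theorem \ref{main3}: the statement is imported from \cite{To-Fish}, where the argument is the fractional analogue of the Madiman--Barron/ABBN proof of monotonicity of Fisher information \cite{ABBN1, BM}. Your proposal reconstructs precisely that route, and its skeleton is sound: you are right that a single application of Theorem \ref{bl} with $\varepsilon=(n-1)/n$ is too weak; the projection identity $\tilde\rho_\lambda(aU+bV)=E\bigl[a\tilde\rho_\lambda(U)+b\tilde\rho_\lambda(V)\mid aU+bV\bigr]$ for $a^\lambda+b^\lambda=1$ is correct (modulo the factor $i\,\sgn(\xi)$ in $\widehat{\mD_{\lambda-1}f}(\xi)$, which you dropped but which does not affect the computation); eliminating the $\sum_j\tilde\rho_\lambda(X_j)$ contribution between the uniform and the averaged leave-one-out decompositions does produce the prefactor $(n/(n-1))^{(\lambda-1)/\lambda}$; and your exponent bookkeeping correctly reduces \fer{mm} to the cross-term bound $C\le\frac{n-2}{n-1}\,I_\lambda(T_{n-1})$.

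The one genuine gap is the justification of that cross-term bound. Writing $\psi_j=\tilde\rho_\lambda(T^{(j)})=E\bigl[(n-1)^{-1/\lambda}\sum_{i\ne j}\tilde\rho_\lambda(X_i)\mid T^{(j)}\bigr]$, the product $\psi_j\psi_k$ involves conditional expectations taken with respect to two different, mutually dependent conditioning variables $T^{(j)}$ and $T^{(k)}$; centering and independence of the $\tilde\rho_\lambda(X_i)$ do not by themselves annihilate the off-diagonal terms $E\bigl[E[\tilde\rho_\lambda(X_i)\mid T^{(j)}]\,E[\tilde\rho_\lambda(X_l)\mid T^{(k)}]\bigr]$ with $i\ne l$, since neither factor is measurable with respect to the other's conditioning $\sigma$-algebra. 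What is actually needed is the variance drop inequality of \cite{ABBN1, BM}: if each $\psi_j$ is a square-integrable, zero-mean function of $(X_i)_{i\ne j}$, then
\begin{equation*}
E\Bigl[\Bigl(\sum_{j=1}^n\psi_j\Bigr)^2\Bigr]\;\le\;(n-1)\sum_{j=1}^nE\bigl[\psi_j^2\bigr],
\end{equation*}
which is proved via the Hoeffding/ANOVA orthogonal decomposition of each $\psi_j$ over the subsets of $\{1,\dots,n\}\setminus\{j\}$, not by a direct independence argument. By exchangeability this inequality is exactly equivalent to $C\le\frac{n-2}{n-1}\,I_\lambda(T_{n-1})$, so once you invoke (or reprove) that lemma your argument closes and yields \fer{mm}; as written, the final step is a heuristic for that lemma rather than a proof of it.
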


As pointed out in \cite{To-Fish}, at difference with the case of the standard central limit
theorem, where $\lambda = 2$ and the monotonicity result of the classical relative
Fisher information reads $I(S_n) \le I(S_{n-1})$, in the case of the central limit
theorem for stable laws, the monotonicity of the relative $\lambda$-Fisher information
also gives a rate of decay. Indeed, formula \fer{mm} of Theorem \ref{main3} shows that,
for all $n>1$
 \be\label{giu}
I_\lambda(T_n) \le \left( \frac 1n \right)^{(2-\lambda)/\lambda} I_\lambda(X),
 \ee
namely convergence in relative $\lambda$-Fisher information sense at rate
$1/n^{(2-\lambda)/\lambda}$.

\section{Further entropy inequalities}\label{entropies}

The goal of this Section is to present in more details the connection between the relative fractional Fisher information and fractional evolution equations of Fokker--Planck type. The connection between relative Shannon entropy, Fisher information and the Fokker--Planck equation, which leads to logarithmic Sobolev inequalities, is well-known, starting from \cite{AMTU, Tos3, Tos4}. We will adapt the method in \cite{Tos3} to the present situation. Given a random variable  $X$ of density $h(x)$, and a constant $a >0$, let us denote by $h_a(x)$ the probability density of $aX$. Let $Y$ a random variable with density $\varphi$, and let $Z_\lambda$ be a L\'evy variable independent of $Y$, such that $1<\lambda<2$, of density $\omega(x)$.  For a given $t >0$ we define
 \be\label{solFP}
X_t= \alpha(t)Y + \beta(t)Z_\lambda, 
 \ee
where 
\be\label{con22}
\alpha (t)= e^{-t/\lambda}, \qquad \beta(t)= (1-e^{-t})^{1/\lambda},
\ee
thus satisfying the relation
 \be\label{norm3}
 \alpha^\lambda(t) + \beta^{\lambda}(t) = 1.
 \ee
Then, the random variable $X_t$, $t >0$, has a density given by
 \be\label{slFP}
 f(x,t)= \varphi_{\alpha(t)}\ast  \omega_{\beta(t)}(x), 
\ee
where, as usual, $\ast$ denotes the convolution product. It can be verified directly by computations that $f(x,t)$ solves the  Fokker-Planck equation \fer{FFPP}, characterized by a fractional diffusion,
with initial value $f(x, t=0) = \varphi(x)$. 

Fokker-Planck type equations with fractional diffusion appear in
many physical contexts \cite{BWM,Cha98,KZB,MFL02}, and have been
intensively studied both from the modeling and the qualitative point
of view.
 Also, fractional diffusion equations in the nonlinear setting have
been introduced and successfully studied in the last years by Caffarelli, Vazquez et al. \cite{CV,
Car, Va1, Va2}. 

The stationary solution of the Fokker-Planck equation \fer{FFPP} is the L\'evy density  of order $\lambda$ defined by \fer{levy} in terms of its Fourier transform. Indeed, if $f(x,t)$  denotes the solution to equation \fer{FFPP},
passing to Fourier transform, we obtain that $\widehat f(\xi,t)$ solves the equation
 \be\label{FPF-fou}
\derpar{\ff}{t} = -|\xi|^\lambda \ff(\xi,t) - \frac\xi\lambda \derpar{\ff(\xi,t)}\xi,
 \ee
which clearly has a stationary solution given by the L\'evy density \fer{levy} of order $\lambda$.

 Integrating equation \fer{FPF-fou} along characteristics (cf. \cite{CT1} for the analogous derivation in the case of the classical Fokker--Planck equation), it is immediate to see that a solution $f(x, t)$ of 
 \eqref{FPF-fou} can be explicitly expressed by
\be\label{sol-FP-four}
  \widehat f(\xi, t)= \widehat \varphi\left( \xi e^{- t/\lambda }\right ) e^{-|\xi|^{\lambda}(1-e^{-t})},
 \ee
which is the Fourier transform of \fer{slFP}.  
Similarly to the  classical Fokker--Planck  equation, where the solution interpolates continuously between the initial datum and the Gaussian density, here the solution to the Fokker--Planck equation with fractional diffusion interpolates continuously between the initial datum $\varphi$ and the L\'evy density $L$ of order $\lambda$.
Also, formula \fer{sol-FP-four} shows  that the L\'evy density $\omega$ is invariant under scaled convolutions satisfying \fer{norm3}, so that
\be\label{scal3}
 \omega(x) = \omega_{\alpha(t)}\ast  \omega_{\beta(t)}(x),
 \ee
Let us consider  the relative (to the L\'evy density) entropy of $X_t$
 \be\label{rel-FP}
H(X_t|\, Z_\lambda) = H(f(t)|\, \omega)= \int_\R f(x,t) \log \frac{f(x,t)}{\omega(x)} \, dx.
 \ee
Our main goal here is to study the time-behavior of the relative entropy. The following holds

\begin{lem}\label{l11}
Let the density $\varphi$  be such that $H(\varphi|\, \omega)$ is finite. Then, if $f(x,t)$ is given by \fer{slFP}, the relative entropy $H(f(t)|\,\omega)$ is monotonically decreasing in time. In addition, if the density $\varphi$ belongs to the  domain of normal attraction of $Z_\lambda$, as time goes to infinity
 \[
 \lim_{t\to \infty} H(f(t)|\, \omega) = 0.
 \]
\end{lem}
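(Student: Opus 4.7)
The strategy splits into proving monotonicity and proving decay to zero.

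For the monotonicity, I would exploit the explicit representation \fer{slFP}, which exhibits $f(t)$ as the image of $\varphi$ under the Markov operator $K_t : h\mapsto h_{\alpha(t)}\ast \omega_{\beta(t)}$. A direct Fourier computation based on \fer{sol-FP-four} shows that $\{K_t\}_{t\ge 0}$ forms a semigroup, $K_{t+s}=K_t K_s$ (it reduces to checking $\beta(s)^\lambda\alpha(t)^\lambda+\beta(t)^\lambda = \beta(s+t)^\lambda$, which in turn is just $(1-e^{-s})e^{-t}+1-e^{-t}=1-e^{-(s+t)}$), while the scaling invariance \fer{scal3} guarantees $K_t\omega=\omega$ for every $t\ge 0$. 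The data processing (entropy contraction) inequality for Markov operators, $H(K p\,|\,K q)\le H(p\,|\,q)$, then yields
\[
H(f(t)|\omega) \,=\, H\bigl(K_{t-s}\,f(s)\,\big|\,K_{t-s}\,\omega\bigr)\,\le\, H(f(s)|\omega)
\qquad\text{for every}\ 0\le s\le t,
\]
which is exactly the asserted monotonicity. An alternative purely analytical route is to differentiate $H(f(t)|\omega)$ directly in $t$, use \fer{FFPP} together with the stationarity identity $D_{\lambda-1}\omega+\frac{x}{\lambda}\omega=0$ and an integration by parts, so as to rewrite $-\frac{d}{dt}H(f(t)|\omega)$ as the $f$-expectation of the product of the classical relative score by the fractional relative score; non-positivity would then follow from a Stroock--Varopoulos type inequality for the fractional Laplacian.

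For the decay to zero, the monotonicity and the obvious lower bound yield the existence of $H_\infty:=\lim_{t\to\infty}H(f(t)|\omega)\ge 0$. As $t\to\infty$ one has $\alpha(t)\to 0$ and $\beta(t)\to 1$, so the explicit convolution \fer{slFP} shows that $f(t)\to\omega$ pointwise (and in $L^1$ by Scheff\'e's theorem). The hypothesis that $\varphi$ belongs to the domain of normal attraction of $Z_\lambda$ provides uniform control on the tails of $f(t)$, which, combined with the pointwise convergence, yields uniform integrability of $f(t)\log\bigl(f(t)/\omega\bigr)$ and therefore $H_\infty=0$.

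The genuinely delicate step is this passage to the limit. Monotonicity and weak convergence of $f(t)$ to $\omega$, together with lower semi-continuity of the relative entropy, only provide $H_\infty\ge H(\omega|\omega)=0$, so one needs a complementary upper bound. The cleanest such bound would be furnished by the logarithmic Sobolev--type inequality \fer{Sobo1} applied along the semigroup, combined with the exponential decay $I_\lambda(f(t)|\omega)\le e^{-2t/\lambda}I_\lambda(\varphi|\omega)$ that follows from \fer{ep}; but that route is circular at this stage, since \fer{Sobo1} is to be derived precisely from the present lemma. The obstacle is therefore to produce the uniform-integrability estimate directly from the convolution structure \fer{slFP}, extracting the necessary tail bounds from the precise meaning of ``domain of normal attraction of $Z_\lambda$''.
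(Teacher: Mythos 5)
Your monotonicity argument is essentially the paper's: the data processing inequality for the operator $K_t h = h_{\alpha(t)}\ast\omega_{\beta(t)}$ is exactly what the paper proves by combining the joint convexity of $(x,y)\mapsto x\log(x/y)$ (Jensen applied to the convolution) with the scale invariance of relative entropy, and the semigroup identity you verify in Fourier variables is the same one the paper uses to propagate the one-step inequality to all pairs $s\le t$. That half of the proposal is sound and matches the paper.

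The decay to zero, however, contains a genuine gap, and you flag it yourself: you assert that the domain-of-normal-attraction hypothesis ``provides uniform control on the tails of $f(t)$'' and hence uniform integrability of $f(t)\log(f(t)/\omega)$, but you never produce that control, and in the final paragraph you explicitly leave it as an open ``obstacle.'' This estimate is the actual content of the paper's proof, and it is not a formality. What is needed is a two-sided bound, uniform in $t\ge\delta>0$, on the ratio $f(x,t)/\omega(x)$: an upper bound at infinity, obtained by splitting $|x|^{\lambda+1}\le 2^{\lambda}\bigl(|x-y|^{\lambda+1}+|y|^{\lambda+1}\bigr)$ inside the convolution \fer{slFP} and using that both $|x|^{\lambda+1}\varphi(x)$ and $|x|^{\lambda+1}\omega(x)$ are bounded with limit $\lambda c$ (conditions \fer{dof}), together with \fer{norm3}, which yields $\limsup_{|x|\to\infty}|x|^{\lambda+1}f(x,t)\le 2^{\lambda}\lambda c$ as in \fer{attr}; an upper bound on compacts from $f(x,t)\le\sup\omega_{\beta(t)}\le(1-e^{-\delta})^{-1/\lambda}\sup\omega$; and a strictly positive lower bound $\inf_{|x|\le R}f(x,t)\ge c_{\delta,R}>0$, which the paper extracts from the moment bound \fer{mome} (finiteness of $\int|x|^{\varsigma}\varphi$ for $\varsigma<\lambda$) to control the mass that $\varphi_{\alpha(t)}$ can push outside $\{|y|\le R\}$. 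Only with this two-sided bound (plus Chebyshev for uniform integrability) can one dominate $f\log(f/\omega)=\omega\,h(f/\omega)$ and pass to the limit; pointwise convergence and Scheff\'e alone, as you note, only give the lower semicontinuity bound $H_\infty\ge 0$. You are also right that invoking \fer{Sobo1} here would be circular. As written, the second half of your proposal is a correct plan with its decisive step missing.
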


\begin{proof}
Let $f,g$ be probability densities such that their relative entropy $H(f|g)$ is finite. Since the function 
\[
k(x,y) = x \log\frac xy, \quad x \ge 0, y \ge 0
\]
is jointly convex,  if  $\mu$ is a probability density, Jensen's inequality implies
 \[
 f\ast \mu(x) \log \frac{f\ast \mu(x)}{g\ast \mu(x)} \le \int_\R f(x-y)\log \frac{f(x-y)}{g(x-y)}\, \mu(y) \,dy.
 \] 
Hence, integrating on both sides, and applying Fubini's theorem one obtains the inequality
 \[
 H(f\ast\mu|\, g\ast\mu) \le H(f|\, g).
 \]
Let us apply this inequality to the  relative (to the L\'evy density) entropy of $f(x,t)$. Since the density $\omega$ is invariant under convolution scaled accordingly to $\alpha$ and $\beta$ (cf. formula \fer{scal3})
it holds
 \[
 H(f(t)|\, \omega) =  H(\varphi_{\alpha(t)}\ast  \omega_{\beta(t)}|\, \omega_{\alpha(t)}\ast  \omega_{\beta(t)}).
 \]
On the other hand, if $\mu(t) = \beta(t)/\alpha(t)$, elementary computations show that, for any given  $t >0$
 \begin{equation}
H(\varphi_{\alpha(t)}\ast  \omega_{\beta(t)}|\, \omega_{\alpha(t)}\ast  \omega_{\beta(t)}) = 
 H(\varphi \ast \omega_{\mu(t)} , \, \omega\ast \omega_{\mu(t)} ) \le 
 H(\varphi  | \, \omega ).
 \end{equation}
 This implies that the relative entropy of the solution at any time $t>0$ is less than the initial relative entropy. Now, consider that, for any time $t_1 >t$, by applying formula \fer{scal3} one get
  \[
 f(x,t_1)= \varphi_{\alpha(t_1)}\ast  \omega_{\beta(t_1)}(x) = \left[\varphi_{\alpha(t)}\ast  \omega_{\beta(t)}\right]_{\alpha(t_1-t)}\ast \omega_{\beta(t_1-t)}(x), 
  \]
 namely that the solution at time $t_1$ corresponding to the initial datum $\varphi$ is equal to the solution obtained at time $t_1-t$ corresponding to the initial datum  given by the solution at time $t$ which starts from the same initial datum $\varphi$.  
Hence the same monotonicity argument can be used starting from  any subsequent time. This shows that the relative entropy is non-increasing in time.
 
Clearly, monotonicity of the relative entropy does not imply that the relative entropy $H(f(t)|\, \omega)$ converges towards zero as time goes to infinity, even if the a.e. convergence to $\omega$ of $f(t)$ could suggest that this is the case.  In order to prove convergence to zero, we further choose the density $\varphi$ in the domain of normal attraction of the stable law $Z_\lambda$. 

Let us briefly recall some information about the domain of attraction of a stable law. More details can be found in the book
\cite{Ibra71} or, among others, in the papers \cite{BLM}, \cite{BLR}. A centered
distribution function $F$ belongs to the domain of normal attraction of the $\lambda$-stable
law \fer{levy} with distribution function $\omega(x)$ if and only if $F$ satisfies the conditions
$(-x)^\lambda F(x)\to c$  and $x^\lambda (1-F(x))\to c$ as $x\to
+\infty$ i.e.
\begin{equation}\label{dof}
\begin{aligned}
&F(-x)=\frac{c}{|x|^\lambda}+S_1(-x) \ \ \ \ \ \ {\rm and }\ \ \ \ \
\ \ 1-F(x)=\frac{c}{x^\lambda}+S_2(x) \ \ \ \ \ \ (x>0)\\
&S_i(x)=o(|x|^{-\lambda})\ \ \  \ {\rm as}\ |x|\to +\infty, \ \ \
i=1,2\\
\end{aligned}
\end{equation}
where $c=\frac{\Gamma(\lambda)}{\pi}\sin\left(\frac{\pi\lambda}{2}\right)$.

   Conditions \fer{dof} can be rephrased in terms of probability densities by saying that a centered
density $f$ belongs to the domain of normal attraction of the $\lambda$-stable
law \fer{levy} with distribution function $\omega(x)$ if and only if $f$ satisfies
$|x|^{\lambda +1}f(x)\to \lambda c$   as $x\to
\pm \infty$. In other words, if a density $f$ belongs to the domain of normal attraction of the $\lambda$-stable
law \fer{levy} , and it is bounded, also the function $|x|^{\lambda +1}f(x)$ is bounded. This property allows to show that, provided the initial density $\varphi$ belongs to the domain of normal attraction of $Z_\lambda$,  the solution to the Fokker--Planck equation belongs to the same domain of normal attraction   for all times $t>0$, and the following bound holds
 \be\label{attr}
 \lim_{x \to \pm \infty} |x|^{\lambda +1}f(x,t) \le 2^\lambda\, \lambda \, c.
 \ee
Indeed, for any $x,y \in \R$
\[
|x|^{\lambda +1} \le 2^\lambda \left( |x-y|^{\lambda +1} + |y|^{\lambda +1}\right),
\]
and
\[
\lim_{x \to \pm \infty} |x|^{\lambda +1}f(x,t) \le 2^\lambda \lim_{x \to \pm \infty} \int_\R |x-y|^{\lambda +1}\varphi_{\alpha(t)}(x-y) \omega_{\beta(t)}(y) \, dy \, +
\]
\[
2^\lambda \lim_{x \to \pm \infty} \int_\R |x-y|^{\lambda +1}\omega_{\beta(t)}(x-y) \varphi_{\alpha(t)}(y) \, dy  =
\]
\[
 2^\lambda \alpha(t)^\lambda \lim_{x \to \pm \infty} \int_\R \left|\frac{x-y}{\alpha(t)}\right|^{\lambda +1}\varphi\left(\frac{x-y}{\alpha(t)}\right) \omega_{\beta(t)}(y) \, dy \, +
\]
\[
 2^\lambda\beta(t)^\lambda \lim_{x \to \pm \infty} \int_\R \left|\frac{x-y}{\beta(t)}\right|^{\lambda +1}\omega\left(\frac{x-y}{\beta(t)}\right) \varphi_{\alpha(t)}(y) \, dy  = 2^\lambda\, \lambda \,  c.
\]
Note that the passage to the limit under the integral sign is justified by the boundedness of the functions  $|x|^{\lambda +1}\varphi(x)$ and $|x|^{\lambda +1}\omega(x)$. Also,  we applied condition \fer{norm3}.

Finally, in view of the fact that the L\'evy density $\omega(x) >0 $ in any set $|x| \le R$, with $R$ bounded positive constant, we have that, for any given $t>0$ both the functions $ |x|^{\lambda +1}f(x,t)$  and $|x|^{\lambda +1}\omega(x)$ are uniformly bounded from  above and below. 

Indeed, for $t \ge \delta>0$
 \[
 f(x,t) = f_{\alpha(t)}\ast\omega_{\beta(t)}(x) = \int_\R f_{\alpha(t)}(y)\omega_{\beta(t)} (x-y) \, dy \le \sup_x \omega_{\beta(t)}(x) \le
 \]
 \[
  \frac 1{\beta(t)}\sup_x \omega(x) = (1-e^{-t})^{-1/\lambda}\sup_x \omega(x) \le (1-e^{-\delta})^{-1/\lambda}\sup_x \omega(x) =C_\delta < +\infty. 
 \]
 Moreover
 \[
f(x,t) = \int_\R f_{\alpha(t)}(y)\omega_{\beta(t)} (x-y) \, dy \ge   \int_{\{ |y| \le R\}} f_{\alpha(t)}(y)\omega_{\beta(t)} (x-y) \, dy. 
 \]
 Therefore, for any positive constant $R$
 \[
\inf_{|x| \le R} f(x,t)  \ge  \inf_{|x| \le R}  \int_{\{ |y| \le R\}} f_{\alpha(t)}(y)\omega_{\beta(t)}(x-y) \, dy \ge  \inf_{|x| \le 2R} \omega_{\beta(t)}(x)\int_{\{ |y| \le R\}} f_{\alpha(t)}(y) \, dy.
 \]
 On the other hand, if $t \ge \delta$
  \[
  \inf_{|x| \le 2R} \omega_{\beta(t)}(x) = \frac 1{\beta(t)}   \inf_{|x| \le 2R/\beta(t)} \omega (x) \ge \inf_{|x| \le 2R(1-e^{-\delta})^{-1/\lambda}} \omega(x).
  \]
 Now, consider that a density function $f$ in the domain of normal attraction of the
$\lambda$-stable law, for any $\varsigma$  such that $0< \varsigma<\lambda$  satisfies \cite{Ibra71}
 \be\label{mome} \int_{\R}|x|^{\varsigma} \, f(x)\, dx <+\infty.
  \ee
 Thanks to \fer{mome}, since $\lambda >1$,
  \[
\int_{\{ |y| >R\}} f_{\alpha(t)}(y) \, dy  = \int_{\{ |y| >R/\alpha(t)\}} f(y) \, dy  \le  \frac{\alpha(t)}R \int_{\{ |y| >R/\alpha(t)\}} |y| f(y) \, dy \le m \frac{\alpha(t)}R ,
  \]
where  $m= \int_\R |x|f(x) \, dx < +\infty$.  Consequently, for each $t \ge \delta$ 
 \[
 \int_{\{ |y| >R\}} f_{\alpha(t)}(y) \, dy  \le  m \frac{e^{-\delta/\lambda}}R ,
 \]
 which implies
 \[
\inf_{|x| \le R} f(x,t)  \ge \left(1- m \frac{e^{-\delta/\lambda}}R \right) \inf_{|x| \le 2R(1-e^{-\delta})^{-1/\lambda}} \omega(x) = c_{\delta, R} >0.
 \] 
 Finally, we can choose $\delta >0$ and $R>0$ in such a way that, for $t \ge \delta$ the function $f(x,t)/\omega(x)$ is uniformly bounded in time from above and below in the domain $\{|x| \le R\}$. By virtue of \fer{attr}, we can in addition choose $R$   large enough to have $f(x,t)/\omega(x)$ uniformly bounded in time from above and below for $t \ge\delta$. Last, Chebyshev's inequality implies that for $t \ge \delta$ the solution density $f(x,t)$ is uniformly integrable. 

This is enough to guarantee that we can pass to the limit into the integral in the relative entropy. This concludes the proof of the lemma. 

\end{proof}

In the rest of this Section, we will assume that   the density $\varphi$ belongs to the  domain of normal attraction of $Z_\lambda$, with bounded relative fractional Fisher information $I_\lambda(\varphi)$.  With these hypotheses, let us evaluate the time evolution of the relative entropy $H(f(t)|\, \omega) $.  To this aim, let us rewrite the Fokker--Planck equation \fer{FFPP} in a way which is useful for our purposes. Since the L\'evy density (of order $\lambda$) satisfies the identity \fer{zz}, so that
 \[
\frac{ D_{\lambda -1}\, \omega }\omega = -\frac x\lambda,
 \]
we can write the fractional Fokker--Planck equation \fer{FFPP} in the form
 \be\label{FPFnew}
 \derpar{f}{t} = \derpar{}{x}\left[ f\left(\frac{D_{\lambda-1} f }f - \frac{ D_{\lambda -1}\, \omega }\omega\right) \right].
 \ee 
 Then, for all $t \ge \delta >0$  we obtain
 \begin{equations}\label{der-ent}
 & \frac d{dt} H(f(t)|\, \omega) =  \frac d{dt} \int_\R  f(x,t) \log \frac{f(x,t)}{\omega(x)} \, dx = \int_\R  \left( 1 +  \log \frac{f(x,t)}{\omega(x)} \right)\, \derpar{f}{t} \, dx = \\
 & \int_\R    \log \frac{f(x,t)}{\omega(x)} \, \derpar{f}{t} \, dx = \int_\R    \log \frac{f(x,t)}{\omega(x)} \, \derpar{}{x}\left[ f\left(\frac{D_{\lambda-1} f }f - \frac{ D_{\lambda -1}\omega }\omega\right) \right] \, dx  =\\
 & -  \int_\R  f\,  \left(\frac{ f '}f - \frac{ \omega' }\omega\right)  \left(\frac{D_{\lambda-1} f }f - \frac{ D_{\lambda -1}\omega }\omega\right) \, dx  = - \bar I_\lambda (f(t))\le 0.
  \end{equations}
 In \fer{der-ent}  we used the fact that the mass of the solution  is preserved along the evolution. Also, integration by parts is justified by the fact that, as proven in Lemma \ref{l11},  $\log [{f(x,t)}/{\omega(x)}] $ is uniformly  bounded in time for $t \ge \delta >0$,  and the  fractional Fisher information of $f(x,t)$ is bounded in view of Theorem \ref{bl}. Moreover, by Cauchy-Schwarz inequality, for any given density $f$ in the domain of attraction of the fractional Fisher information \fer{fish-r2}, the (nonnegative) entropy production $\bar I_\lambda (f)$ satisfies the bound
  \be\label{ep2}
  \bar I_\lambda (f) \le I(f)^{1/2} I_\lambda(f)^{1/2}.
   \ee
By virtue of Theorem \ref{bl}, as in \fer{ep}
  \[ 
  I_\lambda( f(t)) = I_\lambda (X_t) \le \alpha(t)^{2} I_\lambda(Y) = \alpha(t)^{2} I_\lambda(\varphi) ,
  \] 
 with $\alpha(t)$ given by \fer{con22}.  In addition, thanks to \fer{norm3}
  \[
  \max\{ \alpha(t)^\lambda, \beta(t)^\lambda\}  \ge \frac 12,
  \]
  which implies, by  inequality \fer{min2}
  \begin{equations}\label{BS3}
  & I(X_t) = I(\alpha(t)Y + \beta(t) Z) \le \min\{ I(\alpha(t) Y),\, I(\beta(t) Z_\lambda) = \\
  & \min\{ \alpha(t)^{-2} I(Z_\lambda),\, \beta(t)^{-2}I( Z_\lambda) \} \le  2^{2/\lambda} \min\{ I( Y),\, I(Z_\lambda) \}.
    \end{equations}
  Finally, for every time $t>0$ we have the bound
   \be
    \bar I_\lambda (f) \le \exp\{-t/\lambda\}\, 2^{1/\lambda} \, \min\{ I( \varphi),\, I(\omega) \}^{1/2}\, I_\lambda(\varphi)^{1/2}.
       \ee
 Therefore, if  the density $\varphi$ lies in the domain of attraction of the fractional Fisher information, and $\min\{ I( \varphi),\, I(\omega) \}$ is bounded, integrating \fer{der-ent} from zero to infinity, and recalling Lemma \ref{l11} we obtain the inequality
 \[
  H(\varphi |\, \omega) \le \lambda \, 2^{1/\lambda} \, \min\{ I( \varphi),\, I(\omega) \}^{1/2}\, I_\lambda(\varphi)^{1/2}.
  \]
We proved
\begin{thm}\label{main2} Let $X$ be a random variable with density $\varphi$ in the domain of normal attraction of the L\'evy symmetric random variable $Z_\lambda$, $1<\lambda <2$. If in addition $X$ has bounded Fisher information, and lies in the domain of attraction of the fractional Fisher information, the Shannon relative entropy $H(X|Z_\lambda)$ is bounded, and the following inequality holds
 \be\label{sob-frac}
  H(X |\, Z_\lambda) \le \lambda \, 2^{1/\lambda} \, \min\{ I( X),\, I(Z_\lambda) \}^{1/2}\, I_\lambda(X)^{1/2}. 
 \ee
\end{thm}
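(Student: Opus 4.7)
The plan is to obtain \eqref{sob-frac} by the entropy-dissipation method along the fractional Fokker--Planck flow \eqref{FFPP}, in analogy with the proof of the classical logarithmic Sobolev inequality for the Gaussian via the heat equation. The starting point is the representation $X_t = \alpha(t) Y + \beta(t) Z_\lambda$ with $Y$ having density $\varphi$: by the convolution formula \eqref{slFP} the law $f(x,t)$ of $X_t$ solves \eqref{FFPP} with initial datum $\varphi$ and $f(t)\to\omega$ in relative entropy by Lemma \ref{l11}.

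First, I would rewrite \eqref{FFPP} in the form \eqref{FPFnew}, exploiting the fractional score identity \eqref{zz}, i.e. $D_{\lambda-1}\omega/\omega = -x/\lambda$. Differentiating the relative entropy \eqref{rel-FP} and integrating by parts yields the dissipation identity
\begin{equation*}
\frac{d}{dt}H(f(t)|\omega) \,=\, -\int_{\R} f\,\Bigl(\frac{f'}{f} - \frac{\omega'}{\omega}\Bigr)\Bigl(\frac{D_{\lambda-1}f}{f} - \frac{D_{\lambda-1}\omega}{\omega}\Bigr)\,dx \,=:\, -\bar I_\lambda(f(t)).
\end{equation*}
The key observation is that the mixed entropy production $\bar I_\lambda(f)$ is majorized by Cauchy--Schwarz as $\bar I_\lambda(f)\le I(f)^{1/2} I_\lambda(f)^{1/2}$, where the first factor is the \emph{classical} relative Fisher information and the second is the fractional one \eqref{fish-r2}.

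The next step is to control both factors along the flow. Applying the fractional Blachman--Stam inequality of Theorem \ref{bl} in the form \eqref{ep} gives $I_\lambda(f(t))\le \alpha(t)^2\, I_\lambda(\varphi)$. For the classical Fisher information I use Stam's inequality \eqref{min2} together with the scaling $I(aX) = a^{-2}I(X)$ to obtain $I(f(t))\le \min\{\alpha(t)^{-2}I(Y),\, \beta(t)^{-2}I(Z_\lambda)\}$. Since $\alpha(t)^\lambda + \beta(t)^\lambda = 1$ by \eqref{norm3}, one of the two numbers $\alpha(t)^\lambda,\beta(t)^\lambda$ is at least $1/2$, whence $I(f(t))\le 2^{2/\lambda}\min\{I(\varphi),I(\omega)\}$. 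Combining these estimates with $\alpha(t) = e^{-t/\lambda}$ yields the exponential bound
\begin{equation*}
\bar I_\lambda(f(t)) \,\le\, e^{-t/\lambda}\,2^{1/\lambda}\,\min\{I(\varphi),I(\omega)\}^{1/2}\, I_\lambda(\varphi)^{1/2}.
\end{equation*}
Integrating the entropy dissipation identity from $0$ to $\infty$, using Lemma \ref{l11} to send $H(f(t)|\omega)\to 0$, and computing $\int_0^\infty e^{-t/\lambda}dt = \lambda$ produces exactly the claimed bound \eqref{sob-frac}.

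The main obstacle, and the reason for the full strength of the preceding Lemma \ref{l11}, is the rigorous justification of the entropy dissipation identity near $t=0$. The integration by parts that produces $-\bar I_\lambda(f(t))$ requires $\log(f(t)/\omega)$ to be controlled and the fractional Fisher information to be finite; both only become uniformly available for $t\ge \delta > 0$ thanks to the upper/lower bounds on $f(t)/\omega$ and the tail estimate \eqref{attr} derived in Lemma \ref{l11}. I would therefore first carry out the argument on $[\delta,\infty)$ to obtain $H(f(\delta)|\omega)\le \lambda\, 2^{1/\lambda}\min\{I(\varphi),I(\omega)\}^{1/2}I_\lambda(\varphi)^{1/2}$, and then pass to the limit $\delta\downarrow 0$ using monotonicity of $t\mapsto H(f(t)|\omega)$ from Lemma \ref{l11} (together with lower semicontinuity of the relative entropy) to promote the inequality to time $t=0$, i.e.\ to $\varphi$ itself.
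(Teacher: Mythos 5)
Your proposal is correct and follows essentially the same route as the paper: the entropy-dissipation identity along the fractional Fokker--Planck flow, the Cauchy--Schwarz bound $\bar I_\lambda(f)\le I(f)^{1/2}I_\lambda(f)^{1/2}$, the decay estimates from Theorem \ref{bl} and Stam's inequality combined with \eqref{norm3}, and integration in time using Lemma \ref{l11}. Your added care in first integrating on $[\delta,\infty)$ and then letting $\delta\downarrow 0$ via monotonicity is a sensible refinement of the same argument (the paper integrates from $t=0$ directly even though \eqref{der-ent} is only justified for $t\ge\delta>0$), not a different approach.
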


Inequality \fer{sob-frac} is the analogous of the logarithmic Sobolev inequality, which is obtained when $\lambda =2$ (Gaussian case). In this case,  the fractional Fisher information coincides with the classical Fisher information. As for the classical logarithmic Sobolev inequality, however, the inequality is saturated when the laws of $X$ and $Z_\lambda$ coincide. 

In short, let us take $\lambda =2$. Then, the steady state of the Fokker--Planck equation \fer{FFPP} is the Gaussian density $\omega_2$ defined in \fer{max} and equality \fer{der-ent}  takes the simple form
 \be\label{de-en}
 \frac d{dt} H(f(t)|\, \omega) = -I_2 (f(t)) = -I(f(t)|\, \omega _2),
  \ee
where $I(f|\omega _2)$ denotes the standard Fisher information defined in \fer{rel-f}.
In this case, however, formula \fer{ep} gives
 \[
  I(f(t)|\, \omega _2) \le e^{-t}  I(\varphi|\, \omega _2) .
 \]
 Hence, taking the integrals on both sides of \fer{de-en} from $0$ to $+\infty$, and making use of the previous inequality, shows that
  \be\label{lsi}
  H(\varphi |\, \omega) \le  I(\varphi|\, \omega _2),
  \ee
which is nothing but the logarithmic Sobolev inequality corresponding to the Gaussian density $\omega_2$ in one dimension \cite{Tos3}. The general case, corresponding to $\omega_\sigma$, with $\sigma \not=2$ follows easily from analogous argument. We remark that from inequality \fer{lsi} we can easily obtain the classical logarithmic Sobolev inequality in the form
 \[
 H(\varphi) + 1 +\frac 12 \log 4 \pi \le I(\varphi),
 \]
while in the L\'evy case, expressed by inequality \fer{sob-frac}, the boundedness of the right-hand side is guaranteed only for the relative fractional Fisher information. Hence we can not separate to arrive  to an inequality which connects the Shannon entropy of $\varphi$ to its fractional Fisher information which is unbounded. 

Nevertheless, by means of inequality \fer{sob-frac} one can pass convergence results to a stable law in terms of the fractional  Fisher information to convergence results in relative entropy.  Then, by the
 Csiszar--Kullback inequality \cite{Csi, Kul}, convergence in relative entropy  allows to
recover   convergence in $L^1(\R)$ from convergence in relative entropy.
  
\section{Convergence results in relative entropy} \label{regolare}

Let us consider the normalized sum
 \be\label{sum1}
T_n = \frac 1{n^{1/\lambda}}\sum_{j=1}^n X_j,
 \ee  
where the $X_j$'s are independent copies of a centered random
variable $X$ with density function $f$ which lies in the domain of normal attraction of the random variable $Z_\lambda$
 with { L\'evy symmetric stable density $\omega$}, and let $f_n$ denote the distribution function of $T_n$, $n\ge 1$. If in addition the density $f$ has bounded Fisher information, and belongs to the domain of attraction of the relative fractional Fisher information, so that $I_\lambda(f) < +\infty$,  inequality \fer{sob-frac} implies that the relative entropy $H(f |\, \omega)$ is bounded. Then, the same inequality guarantees that, for each $n >1$ 
 \be\label{start}
 H(T_n |\, Z_\lambda) \le \lambda \, 2^{1/\lambda} \, \min\{ I( T_n),\, I(Z_\lambda) \}^{1/2}\, I_\lambda(T_n)^{1/2}. 
  \ee 
 By virtue of Theorem \ref{main3}, inequality \fer{giu} implies that 
  \[
 I_\lambda(T_n)^{1/2} \le  \left( \frac 1n \right)^{(2-\lambda)/(2\lambda)} I_\lambda(X)^{1/2}.
  \]
Hence, convergence in relative entropy at the rate  $ n ^{-(2-\lambda)/(2\lambda)} $ follows as soon as the uniform boundedness of $(I(T_n)$ is proven.  The following theorem gives sufficient conditions to  ensure that $I(T_n) \le C$ for $n >1$, where  $C$ is a suitable constant. 

\begin{thm}\label{main-f} Let $f$ belong to the domain of normal attraction of the L\'evy symmetric random variable $Z_\lambda$, $1<\lambda <2$ and assume that there exists $M>0$ such that 
 \be\label{con23}
\int_\R |\hf(\xi)|^M(1+|\xi|^2)^k \, d\xi = C_M < +\infty.
 \ee
 Then, for $n \ge M/2$, $f_n \in H^k(\R)$. In addition, condition \fer{con23} holds with $M=2$ if $f \in H^k(\R)$, with $M > (2k+1)/\varepsilon$ if $ |\hf(\xi)||\xi|^\varepsilon$ is bounded for $|\xi| \ge 1$, where $\varepsilon >0$ is arbitrary, and with $M> 2k+1$ if $I(f)$ is bounded.
\end{thm}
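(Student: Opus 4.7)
The plan is to work entirely in Fourier space. The characteristic function of the normalized sum $T_n$ is
\[
\hf_n(\xi) = \hf(\xi/n^{1/\lambda})^n,
\]
and by Plancherel the Sobolev norm is $\|f_n\|_{H^k}^2 = \int_\R |\hf_n(\xi)|^2 (1+|\xi|^2)^k \, d\xi$, so everything reduces to estimating a single Fourier integral.

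First I would prove the main conclusion. After the change of variables $\eta = \xi/n^{1/\lambda}$ and using $(1+n^{2/\lambda}|\eta|^2) \le n^{2/\lambda}(1+|\eta|^2)$ for $n \ge 1$, one obtains
\[
\|f_n\|_{H^k}^2 = n^{1/\lambda} \int_\R |\hf(\eta)|^{2n}(1+n^{2/\lambda}|\eta|^2)^k\, d\eta \le n^{(2k+1)/\lambda} \int_\R |\hf(\eta)|^{2n}(1+|\eta|^2)^k\, d\eta.
\]
Since $|\hf| \le 1$ everywhere (it is a characteristic function), the pointwise inequality $|\hf|^{2n} \le |\hf|^M$ holds as soon as $n \ge M/2$, so the right-hand side is bounded above by $n^{(2k+1)/\lambda} C_M < +\infty$, yielding $f_n \in H^k(\R)$.

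The remaining task is to verify that condition \eqref{con23} is implied by each of the three hypotheses listed. For $M = 2$, Plancherel gives $\int |\hf|^2 (1+|\xi|^2)^k d\xi = \|f\|_{H^k}^2$, so the condition is equivalent to $f \in H^k(\R)$. For the case $|\hf(\xi)||\xi|^\varepsilon$ bounded on $\{|\xi|\ge 1\}$, I would split \eqref{con23} at $|\xi| = 1$: the piece on $\{|\xi|\le 1\}$ is finite because $|\hf| \le 1$, while on $\{|\xi|\ge 1\}$ one has $|\hf|^M \le C^M |\xi|^{-M\varepsilon}$, so the integrand is dominated by $|\xi|^{2k - M\varepsilon}$, which is integrable at infinity exactly when $M\varepsilon > 2k+1$, i.e.\ $M > (2k+1)/\varepsilon$.

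The main technical step is the third sufficient condition, where the differential functional $I(f) < +\infty$ must be converted into pointwise decay of $\hf$. The crucial identity is $f'(x) = 2\sqrt{f(x)}\,(\sqrt{f})'(x)$, whence Cauchy--Schwarz yields
\[
|\xi \hf(\xi)| = |\widehat{f'}(\xi)| \le 2\Bigl(\int_\R f\,dx\Bigr)^{1/2}\Bigl(\int_\R ((\sqrt{f})')^2\,dx\Bigr)^{1/2} = \sqrt{I(f)},
\]
using the standard identity $I(f) = 4\int_\R ((\sqrt{f})')^2\,dx$. Combined with $|\hf| \le 1$ this produces the global bound $|\hf(\xi)| \le C/(1+|\xi|)$, so the integral in \eqref{con23} is controlled by $C^M \int_\R (1+|\xi|^2)^k (1+|\xi|)^{-M}\,d\xi$, which is finite precisely when $M > 2k+1$. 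This passage from the local information $I(f)$ to a global decay rate of $\hf$ via the square-root factorization is the only nontrivial ingredient, and it is what forces the restriction $M > 2k+1$ rather than the sharper $M > 2k$ one might naively hope for.
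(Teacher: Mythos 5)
Your proof is correct, and for the main conclusion it takes a genuinely shorter route than the paper. You bound the whole integral at once via the change of variables and the crude estimate $(1+n^{2/\lambda}|\eta|^2)^k \le n^{2k/\lambda}(1+|\eta|^2)^k$, then use $|\hf|^{2n}\le|\hf|^M$ globally; this gives $\|f_n\|_{H^k}^2 \le n^{(2k+1)/\lambda}C_M$, which indeed proves $f_n\in H^k(\R)$ for every $n\ge M/2$ and, notably, does not even use the domain-of-attraction hypothesis for this part. The paper instead splits the frequency domain at $|\xi|=n^{1/\lambda}\delta$: on the outer region it uses $\sup_{|\eta|>\delta}|\hf(\eta)|^{2n-M}\to 0$ faster than any power of $n$, and on the inner region it invokes the domain-of-attraction expansion $1-\hf(\xi)=(1-R(\xi))|\xi|^\lambda$ to dominate $|\hf(\xi/n^{1/\lambda})|^{2n}$ by $\exp\{-2|\xi|^\lambda(1-\gamma)\}$. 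What that extra work buys is the uniform-in-$n$ estimate $\limsup_n\|f_n\|_{H^k}^2\le\|\omega\|_{H^k}^2$ (inequality \fer{okk}), which your $n^{(2k+1)/\lambda}$-growing bound does not give and which the paper relies on afterwards: it is the ingredient that yields uniform boundedness of $\|f_n\|_{TV}$, hence the uniform bound on $I(T_n)$ feeding Theorem \ref{main-entr}, and it is used again in Corollary \ref{main-h} to upgrade weak to strong $H^k$ convergence. So your argument settles the theorem as literally stated but would need to be supplemented by the paper's splitting if you want the quantitative consequences drawn from it. The three sufficient conditions are handled essentially as in the paper; for the case $I(f)<+\infty$ you work in physical space (bounding $\|f'\|_{L^1}\le\sqrt{I(f)}$ by Cauchy--Schwarz on $f'=2\sqrt{f}\,(\sqrt f)'$) where the paper writes $\hf=\hg\ast\hg$ with $g=\sqrt f$ and estimates the convolution in Fourier space, but the two computations are equivalent and both reduce to the case $\varepsilon=1$.
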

  
\begin{proof}
Analogous result has been obtained in \cite{LT} in the case of the classical central limit theorem (cf. Theorem 5.1). The arguments there can be easily adapted to the present situation.
We take $ n \ge M$ and remark that 
 \[
 \hf_n(\xi) = \hf\left( \frac\xi{n^{1/\lambda}} \right)^n.
 \]
Hence, for any fixed $\delta >0$ we have
 \[
 \int_\R \left|  \hf\left( \frac\xi{n^{1/\lambda}} \right)\right|^{2n} (1+|\xi|^2)^k \, d\xi =
 \int_{|\xi| \le\, n^{1/\lambda}\delta} \left|  \hf\left( \frac\xi{n^{1/\lambda}} \right)\right|^{2n} (1+|\xi|^2)^k \, d\xi +
 \]
 \[
\int_{|\xi| >\, n^{1/\lambda}\delta}\left|  \hf\left( \frac\xi{n^{1/\lambda}} \right)\right|^{2n} (1+|\xi|^2)^k \, d\xi.
 \]
Consider now that, for $n>1$ 
 \[
\int_{|\xi| >\, n^{1/\lambda}\delta}\left|  \hf\left( \frac\xi{n^{1/\lambda}} \right)\right|^{2n} (1+|\xi|^2)^k \, d\xi =  
n^{1/\lambda} \int_{|\eta| >\, \delta}\left|  \hf\left( \eta \right)\right|^{2n} (1+ n^{2/\lambda}\,|\eta|^2)^k \, d\xi = 
 \]
 \[
 n^{1/\lambda} \int_{|\eta| >\, \delta}\left|  \hf\left( \eta \right)\right|^{2n} \left(\frac 1{n^{2/\lambda}}+ \,|\eta|^2\right)^k n^{2k/\lambda}\, d\xi \le
 \]
  \[
 n^{(2k+1)/\lambda} \sup_{|\eta| >\, \delta}\left|  \hf\left( \eta \right)\right|^{2n-M}\int_{|\eta| >\, \delta}\left|  \hf\left( \eta \right)\right|^{M} \left( 1+ \,|\eta|^2\right)^k n^{2k/\lambda}\, d\xi \le
 \]
 \[
 C_M  n^{(2k+1)/\lambda} \sup_{|\eta| >\, \delta}\left|  \hf\left( \eta \right)\right|^{2n-M}. 
 \]
In view of condition \fer{con23}, $f \in C(\R)$, and $|\hf(\eta)|<1$ for $\eta \not= 0$, with $  \sup_{|\eta| >\, \delta}\left|  \hf\left( \eta \right)\right| <1$. Consequently, as $n \to +\infty$
 \[
 n^{(2k+1)/\lambda} \sup_{|\eta| >\, \delta}\left|  \hf\left( \eta \right)\right|^{2n-M} \to 0.
 \]
Next, recall that $f(x)$ belongs to the domain of normal attraction of the L\'evy stable law of order $\lambda$. This implies that $\hf(\xi)$ satisfies the condition \cite{Ibra71}
 \be\label{nor5}
 1 - \hf(\xi) = (1-R(\xi))|\xi|^\lambda,
 \ee
where $R(\xi) \in L^\infty(\R)$, and $|R(\xi)| = o(1)$ as $\xi \to 0$. 
Thanks to \fer{nor5}, for each $\gamma >0$, we can choose $\delta$ small enough in such a way that, when $|\xi| < \delta$
 \[ 
 \hf(\xi) = 1 - (1-R(\xi))|\xi|^\lambda \le \exp\left\{ -|\xi|^\lambda(1-\gamma)\right\}.
 \]
Thus we obtain
 \[
 \int_{|\xi| \le\, n^{1/\lambda}\delta} \left|  \hf\left( \frac\xi{n^{1/\lambda}} \right)\right|^{2n} (1+|\xi|^2)^k \, d\xi  \le
 \]
 \[
 \int_{|\xi| \le\, n^{1/\lambda}\delta} \left[ \exp\left\{ -\frac{|\xi|^\lambda}n(1-\gamma)\right\}\right]^{2n} (1+|\xi|^2)^k \, d\xi  =
 \]
\[
 \int_{|\xi| \le\, n^{1/\lambda}\delta} \exp\left\{ -2|\xi|^\lambda(1-\gamma)\right\} (1+|\xi|^2)^k \, d\xi  \le
 \]
\[
 \int_\R\exp\left\{ -2|\xi|^\lambda(1-\gamma)\right\} (1+|\xi|^2)^k \, d\xi  = C(k,\lambda,\gamma).
 \]
This shows in particular that $f_n$ is bounded in $H^k(\R)$,  and in addition
 \[
 \lim\sup_n \|f_n\|^2_{H^k(\R)} \le \left\|\exp\left\{ -|\xi|^\lambda(1-\gamma)\right\}\right\|^2_{H^k(\R)}
 \]
for all $\gamma >0$. Taking $\gamma$ to zero, we conclude that we have
 \be\label{okk}
 \lim\sup_n \|f_n\|^2_{H^k(\R)} \le \left\| \omega \right\|^2_{H^k(\R)}.
 \ee

 Next, by the definition of $H^k(\R)$, \fer{con23} holds with $M=2$ if $f \in H^k(\R)$. If $|\hf(\xi)||\xi|^\varepsilon$ is bounded for $|\xi| \ge 1$ and for some $\varepsilon >0$, then for some constant $C>0$ we have that
 \[
|\hf(\xi)|^M(1+|\xi|^2)^k \le C |2\xi|^{2k-M\varepsilon}, \qquad |\xi| \ge 1. 
 \]
 Therefore \fer{con23} holds if $M\varepsilon > 2k+1$.
 
 Finally, if $I(f)< +\infty$, then $g= \sqrt f \in H^1(\R)$. Thus $\hf= \hg\ast \hg$, where $\hg$ satisfies
  \[
  \int_\R |\hg(\xi)|^2(1+ |\xi|^2) \, d\xi < +\infty.
  \]
 Then, for all $\xi \in \R$ we have
 \[
 |\hf(\xi)||\xi| \le \int_\R \hg(\xi-\eta)\hg(\eta) (|\xi - \eta| + |\eta|) \, d\eta \le
 \]
 \[
2 \left( \int_\R |\hg(\xi)|^2 |\xi|^2 \, d\xi \right)^{1/2} \left( \int_\R |\hg(\xi)|^2  \, d\xi \right)^{1/2}.
 \]
At this point, we can apply the previous remark with $\varepsilon =1$.
 \end{proof}  

If condition \fer{con23} of Theorem \ref{main-f} holds true for $k=1$, we conclude that the sequence $f_n(x)$ is uniformly bounded in $H^1(\R)$, and consequently uniformly bounded by imbedding in $W^{1,1}(\R)$. Hence, if $I(f) < +\infty$, for each $n>3$   there exists a positive constant $C_{TV}$ such that
 \[
\| f_n\|_{TV} =  \int_\R |f'_n(x)| \, dx \le C_{TV}.
 \]
Consequently, the sequence $\{f_n\}_{n>3}$ is uniformly bounded in total variation norm. By Proposition 3.2 in \cite{BCG3}, if independent variables $Y_j$, $j= 1,2,3$ have densities $p_j(x)$ of bounded variation, then $Y_1+Y_2+Y_3$ has finite Fisher information, and moreover
 \[
 I(Y_1+Y_2+Y_3) \le \frac 12 \left( \|p_1\|_{TV}  \|p_2\|_{TV} + \|p_1\|_{TV}  \|p_3\|_{TV} + \|p_2\|_{TV}  \|p_3\|_{TV}\right).
 \]
This allows to conclude that, provided $I(f)< +\infty$, for all $n\ge 1$, $I(T_n) \le C$.

We conclude with the following

\begin{thm}\label{main-entr} Let the  random variable $X$  belong to the domain of normal attraction of the random variable $Z_\lambda$ with { L\'evy symmetric stable density $\omega$}. If in addition the density $f$ of $X$ has bounded Fisher information, and belongs to the domain of attraction of the relative fractional Fisher information, so that $I_\lambda(f) < +\infty$, the sequence of density functions $f_n$ of the normalized sums $T_n$,  defined in \fer{sum1} converge to zero in relative entropy and 
 \be\label{decay3}
 H(T_n |\, Z_\lambda) \le  C_{\lambda}(X) \, \left( \frac 1n \right)^{(2-\lambda)/(2\lambda)} I_\lambda(X)^{1/2}.
 \ee
\end{thm}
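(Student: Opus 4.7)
The strategy is to apply the fractional logarithmic Sobolev-type inequality \eqref{sob-frac} of Theorem \ref{main2} directly to $T_n$, which yields
\[
H(T_n\,|\,Z_\lambda) \le \lambda\,2^{1/\lambda}\,\min\{I(T_n),\,I(Z_\lambda)\}^{1/2}\,I_\lambda(T_n)^{1/2},
\]
and then to bound each factor on the right-hand side. Theorem \ref{main3} immediately provides the polynomial decay $I_\lambda(T_n)^{1/2}\le n^{-(2-\lambda)/(2\lambda)}\,I_\lambda(X)^{1/2}$ via inequality \eqref{giu}. Hence the entire burden reduces to exhibiting a uniform-in-$n$ bound $I(T_n)\le C$; such a bound will then be absorbed into the constant $C_\lambda(X)=\lambda\,2^{1/\lambda}\min\{C,I(Z_\lambda)\}^{1/2}$, and the decay rate \eqref{decay3} will follow at once.

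The central obstacle is therefore the uniform Fisher-information bound. I would follow the three-step route already laid down in the paragraph preceding the statement. First, since $I(f)<+\infty$, the function $\sqrt f$ lies in $H^1(\R)$, and the last assertion of Theorem \ref{main-f} then yields condition \eqref{con23} with $k=1$ and some $M>3$. Second, Theorem \ref{main-f} itself furnishes a uniform bound on $\|f_n\|_{H^1(\R)}$ for all $n\ge M/2$; the Sobolev embedding $H^1(\R)\hookrightarrow W^{1,1}(\R)$ converts this into a uniform total-variation bound $\|f_n\|_{TV}\le C_{TV}$ past that threshold. Third, writing $T_{3m}=3^{-1/\lambda}\bigl(T_m^{(1)}+T_m^{(2)}+T_m^{(3)}\bigr)$ with independent copies $T_m^{(i)}$ of $T_m$, and invoking Proposition~3.2 of \cite{BCG3} (which controls the Fisher information of a convolution of three bounded-variation densities) together with the scaling identity $I(aY)=a^{-2}I(Y)$, one obtains a uniform bound $I(T_n)\le C'$ for $n$ sufficiently large. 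The finitely many remaining small indices yield finite values of $I(T_n)$ by iterating Stam's inequality \eqref{f-stam} and applying the scaling, so one chooses $C$ as the maximum of $C'$ and these finitely many values.

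Combining the three ingredients produces inequality \eqref{decay3} with $C_\lambda(X)$ as above, and convergence $H(T_n\,|\,Z_\lambda)\to 0$ is then immediate because the exponent $(2-\lambda)/(2\lambda)$ is strictly positive for $1<\lambda<2$. The genuinely hard point is the uniform Fisher-information bound on $I(T_n)$, because, at variance with the Gaussian case, classical Fisher information is not monotone along the $n^{-1/\lambda}$-rescaled convolutional sequence: iterated Stam only gives $I(T_n)\le n^{2/\lambda-1}I(X)$, which blows up in $n$ when $\lambda<2$. Circumventing this requires the detour through the Fourier-side $H^1$-bound provided by Theorem \ref{main-f} and the bounded-variation-to-Fisher inequality of \cite{BCG3}, which together exploit the increasing regularity of $f_n$ as $n$ grows rather than any contractive property of $I$ itself.
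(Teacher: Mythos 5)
Your proposal is correct and follows essentially the same route as the paper: apply inequality \fer{sob-frac} to $T_n$, use the decay \fer{giu} for $I_\lambda(T_n)^{1/2}$, and obtain the uniform bound on $I(T_n)$ from the uniform $H^1(\R)$ (hence total-variation) bound of Theorem \ref{main-f} together with Proposition 3.2 of \cite{BCG3}. Your explicit three-block decomposition of $T_n$ and the treatment of the finitely many small indices via Stam's inequality only spell out details the paper leaves implicit.
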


Despite the fact that the constant $ C_{\lambda}(X) $ is not explicitly known, Theorem \ref{main-entr} insures convergence to zero in relative entropy at a precise rate. Thanks to  Csiszar--Kullback inequality \cite{Csi, Kul}, convergence in relative entropy  implies   convergence in $L^1(\R)$ at the sub-optimal rate $n^{-(2-\lambda)/(4\lambda)}$ . Indeed, if $f$ satisfies all the conditions of Theorem \ref{main-entr}
 \be\label{ck}
 \|f _n- \omega\|_{L^1(\R)}^2 \le 2 H(T_n |\, Z_\lambda) \le  C_{\lambda}(X) \, \left( \frac 1n \right)^{(2-\lambda)/(2\lambda)} I_\lambda(X)^{1/2}.
 \ee
 On the other hand, convergence in $L^1(\R)$ of $f_n$ to $\omega$ implies that $f_n$ converges to $\omega$ weakly in $H^k(\R)$. 
This weak convergence, combined with inequality \fer{okk} yields the strong convergence in $H^k(\R)$.  This implies the following

\begin{cor}\label{main-h} 
Let $f$ satisfy the conditions of Theorem \ref{main-entr}. Then $f_n$ converges to $\omega$ in $H^k(\R)$ for all $k \ge 0$. Moreover, there is convergence of $f_n$ to $\omega$ in the homogeneous Sobolev space $\dot H^k(\R)$ at the rate $[n^{-(2-\lambda)/(4\lambda)}]^{2/(2k+3)}$.
\end{cor}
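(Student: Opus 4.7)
The plan is to combine the quantitative $L^1$ convergence supplied by Theorem \ref{main-entr} with the uniform high-regularity bounds provided by Theorem \ref{main-f} and the $\limsup$ estimate \fer{okk}. Since $I(f)<+\infty$, the last assertion of Theorem \ref{main-f} shows that for every $k\ge 0$ the integrability condition \fer{con23} holds (take $M>2k+1$), so that $\{f_n\}_{n\ge n_0(k)}$ is uniformly bounded in $H^k(\R)$; moreover \fer{okk} gives $\limsup_n \|f_n\|_{H^k(\R)} \le \|\omega\|_{H^k(\R)}$, and the $L^1$ convergence follows from \fer{ck} with rate $n^{-(2-\lambda)/(4\lambda)}$.

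For the first assertion (strong $H^k$ convergence for every $k \ge 0$), I would first use the $L^1(\R)$ convergence to deduce that $\hf_n \to \widehat{\omega}$ uniformly on compact sets. Combined with the uniform $H^{k'}$-bound for any $k'>k$, this yields weak convergence $f_n \rightharpoonup \omega$ in $H^k(\R)$ by testing in Fourier space (the tail is controlled by the uniform $H^{k'}$ bound and Cauchy--Schwarz). In the Hilbert space $H^k(\R)$, weak convergence combined with the norm inequality $\limsup_n \|f_n\|_{H^k(\R)} \le \|\omega\|_{H^k(\R)}$ upgrades to convergence of norms, hence to strong convergence $\|f_n-\omega\|_{H^k(\R)} \to 0$.

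For the quantitative rate in $\dot H^k(\R)$, set $g_n=f_n-\omega$ and split in Fourier space at a radius $R>0$ to be optimized:
\[
\|g_n\|_{\dot H^k(\R)}^2 = \int_{|\xi|\le R}|\xi|^{2k}|\widehat{g_n}(\xi)|^2 \, d\xi + \int_{|\xi|>R}|\xi|^{2k}|\widehat{g_n}(\xi)|^2 \, d\xi.
\]
The low-frequency part is bounded, via $\|\widehat{g_n}\|_{L^\infty} \le \|g_n\|_{L^1(\R)}$, by $C R^{2k+1}\|g_n\|_{L^1(\R)}^2$; the high-frequency part, thanks to the uniform $H^{k+1}(\R)$ bound from the previous paragraph, is bounded by $C' R^{-2}$. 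Balancing via $R = \|g_n\|_{L^1(\R)}^{-2/(2k+3)}$ produces
\[
\|g_n\|_{\dot H^k(\R)}^2 \le C_k\, \|g_n\|_{L^1(\R)}^{4/(2k+3)},
\]
and inserting the $L^1$ rate from \fer{ck} yields precisely the announced rate $[n^{-(2-\lambda)/(4\lambda)}]^{2/(2k+3)}$.

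The only technical point is the uniform $H^{k+1}(\R)$ bound along the sequence, but this is furnished directly by Theorem \ref{main-f} under the bounded Fisher information hypothesis (which guarantees \fer{con23} for all $M>2k+3$); the rest is the standard Hilbert-space upgrade from weak to strong convergence via convergence of norms, and a classical Fourier-space interpolation between $L^1$ and $H^{k+1}$, so I do not expect a substantial obstruction.
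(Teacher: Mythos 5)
Your proposal is correct and follows essentially the same route as the paper: weak convergence in $H^k(\R)$ from the $L^1$ convergence, upgraded to strong convergence via the $\limsup$ norm bound \fer{okk} (Radon--Riesz), and then the Fourier-space splitting at radius $R$, optimized to give the interpolation inequality $\|g\|_{\dot H^{k}} \le C_k \|g\|_{L^1}^{2/(2k+3)} \|g\|_{\dot H^{k+1}}^{(2k+1)/(2k+3)}$, applied to $f_n-\omega$ together with the uniform $H^{k+1}$ bound from Theorem \ref{main-f} and the rate \fer{ck}. The only (harmless) difference is that you absorb the uniformly bounded $\dot H^{k+1}$ factor into a constant before optimizing, while the paper states the interpolation inequality with the $\dot H^{k+1}$ norm kept explicit; you are also slightly more careful than the printed text in noting that the uniform bound needed is at the $H^{k+1}$ level.
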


\begin{proof}
For a given function in $L^1(\R)\cap \dot H^{k+1}(\R)$, for any given constant $R >0$ it holds
 \[
 \int_\R |\xi|^{2k}|\widehat f(\xi)|^2 \, d\xi  \le \int_{|\xi|<R} |\xi|^{2k}|\widehat f(\xi)|^2 \, d\xi + \frac 1{R^2}\int_{|\xi|\ge R} |\xi|^{2k+2}|\widehat f(\xi)|^2 \, d\xi \le
 \]
 \[
\|f\|_{L^1}^2 \int_{|\xi|<R} |\xi|^{2k}\, d\xi + \frac 1{R^2}\|f\|_{\dot H^{k+1}}^2 = \frac{2R^{2k+1}}{2k+1}\|f\|_{L^1}^2 + \frac 1{R^2}\|f\|_{\dot H^{k+1}}^2.
 \]
Optimizing over $R$ we get
 \be\label{def4}
 \|f\|_{\dot H^{k}} \le C_k \|f\|_{L^1}^{2/(2k+3)} \|f\|_{\dot H^{k+1}}^{(2k+1)/(2k+3)},
 \ee 
where $C_k$ is an explicitly computable constant. Applying \fer{def4} to $\f_n- \omega$  we then obtain
 \[
\|f_n-\omega \|_{\dot H^{k}} \le C_k \|f_n-\omega \|_{L^1}^{2/(2k+3)}\left( \|f_n\|_{\dot H^{k+1}}+  \|\omega\|_{\dot H^{k+1}}\right)^{(2k+1)/(2k+3)}. 
 \]
Now, consider that the sequence $f_n$, for $n\ge n_0$ sufficiently large, is bounded in  $H^k(\R)$. From this and \fer{ck} the result follows.
\end{proof}

\section{Conclusions}
In a recent paper \cite{To-Fish}, in connection with the study of the normalized sums of random variables in the central limit theorem for stable laws, we  introduced the definition of relative fractional Fisher information. This nonlocal functional is based on a suitable modification of the  linear score function used in theoretical statistics. As the
linear score function $f'(X)/f(X)$ of a random variable $X$ with a (smooth) probability density $f$ identifies  Gaussian variables as the
unique random variables for which the score is linear (i.e. $f'(X)/f(X) =CX$), L\'evy symmetric stable laws are identified as the unique random variables for which the new defined fractional linear score is linear. In this paper we showed that the fractional Fisher information can be fruitfully used to bound the relative (to the L\'evy stable law) Shannon entropy, through an inequality similar to the classical logarithmic Sobolev inequality. Analogously to the central limit theorem, where monotonicity of entropy along the sequence provides an explicit rate of convergence to the Gaussian law for some smooth densities \cite{JB}, in the case of the central limit theorem for stable laws we succeeded in proving convergence in $L^1(\R)$ at explicit rate, and, for smooth densities, convergence in various Sobolev spaces (still with rate).

\section{Appendix}\label{App}
In this short appendix we will collect the notations concerning fractional derivatives, with some applications to the L\'evy stable laws. Also, we list  the various functional spaces used in the paper. Last, we recall some properties of Linnik distribution, that represents the main example of probability density to which the results of the present paper can be applied.

\subsection{\it Fractional derivatives}

For $0 <\nu < 1$ we let $R_\nu$ be the one-dimensional
\emph{normalized} Riesz potential operator defined for locally integrable functions by
\cite{Rie, Ste}
 \[
 R_\nu(f)(x) = S(\nu) \int_\R \frac{f(y)\, dy}{|x-y|^{1-\nu}}.
 \]
The constant $S(\nu)$ is chosen to have
 \be\label{rt}
\widehat{R_\nu(f)}(\xi) = |\xi|^\nu \widehat f(\xi).
 \ee
Since for $0 <\nu < 1$ it holds  \cite{Lieb83}
 \be\label{hom}
 \mathcal F |x|^{\nu -1} = |\xi|^{-\nu} \pi^{1/2} \Gamma \left(\frac{1-\nu}2 \right) \Gamma \left(\frac{\nu}2 \right),
 \ee
where, as usual $\Gamma(\cdot)$ denotes the Gamma function, the value of $S(\nu)$
is given by
 \[
 S(\nu) = \left[\pi^{1/2} \Gamma \left(\frac{1-\nu}2 \right) \Gamma \left(\frac{\nu}2 \right)\right]^{-1}.
 \]
 Note that $S(\nu) = S(1-\nu)$.

We then define the fractional derivative of order $\nu$ of a real function $f$ as
($0 <\nu < 1$)
 \be\label{fa}
 \frac{d^\nu f(x)}{dx^\nu} = \mD_\nu f(x) = \frac{d}{dx}R_{1-\nu}(f)(x).
 \ee
Thanks to \fer{rt}, in Fourier variables
 \be\label{d1}
 \widehat{{\mD}_\nu f}(\xi) = i \frac{\xi}{|\xi|} |\xi|^\nu \widehat{f}(\xi).
 \ee
It is immediate to verify that, for $0<\nu<1$,   L\'evy centered stable laws $\omega(x)$ of parameter $\lambda = 1+\nu$ satisfy
 \be\label{new-l}
 \frac{\mD_\nu\, \omega(x)}{\omega(x)} = -\frac x{1+\nu}.
 \ee
Indeed, identity \fer{new-l}
is verified  if and only if, on the set $\{f>0\}$
 \be\label{ccw}
\mD_\nu f(x) = -\frac{xf(x)}{1+\nu}.
 \ee
Passing to Fourier transform, this identity yields
 \[
i \xi |\xi|^{\nu-1} \widehat f(\xi) = -i \frac 1{1+\nu}\frac{\partial \widehat f(\xi)}{\partial
\xi},
 \]
and from this follows
 \be\label{ok3}
\widehat f(\xi) = \widehat f(0) \exp\left\{-|\xi|^{\nu +1}\right\}.
 \ee
Finally,   imposing that $f(x)$ is a probability
density function (i.e. by fixing $\widehat f(\xi= 0) =1$), we obtain  that the L\'evy
stable law of order $1 + \nu$ is the unique probability density solving  \fer{new-l}.

\subsection{\it Functional framework}

We list below the various functional spaces used in the paper.
      For $p \in [1, +\infty)$ and $q \in [1, +\infty)$, we denote by $L_q^p$  the weighted Lebesgue spaces
            \begin{equation*}
                L^p_q := \left\{ f : \R \rightarrow \R \text{ measurable; }\|f\|_{L^p_q}^p := \int_\R |f(x)|^p \, (1+ x^2)^{q/2} \, dx < \infty \right\}.
            \end{equation*}
             In particular, the usual Lebesgue spaces are given by
            \[ L^p := L^p_0.\]
            Moreover, for $f \in L^1_q$, we can define for any $\kappa \leq q$  the $\kappa^{th}$ order \emph{moment} of $f$ as the quantity
            \begin{equation*}
              m_\kappa(f):= \int_\R f(x) \, |x|^{\kappa} dx\, < \, \infty.
            \end{equation*}
            For $k \in \N$, we denote by $W^{k,p}$ the Sobolev spaces
            \begin{equation*}
                W^{k,p} := \left\{ f \in L^k; \|f\|_{W{k,p}}^p := \sum_{|j| \leq k} \int_{\R} \left |f^{(j)}(v)\right |^p \, dx < \infty \right \}.
            \end{equation*}
          If $p=2$ we set $H^k := W^{k,2}$.

          Given a probability density $f$, we define its \emph{Fourier transform} $\mathcal F(f)$ by
        \begin{equation*}
          \mathcal F(f)(\xi) = \widehat{f}(\xi) := \int_\R e^{- i \, \xi \, x} f(x)\, dx, \qquad \forall  \xi \in \R.
        \end{equation*}
        The Sobolev space $H^k$ can equivalently be defined for any $k \geq 0$ by the norm
            \begin{equation*}
                \|f\|_{H^{k}} := \left \| \mathcal F\left (f \, \right )\right \|_{L^2_{2k}}.
            \end{equation*}
            The homogeneous Sobolev space $\dot H^k$ is then defined by the homogeneous norm
            \begin{equation*}
              \|f\|_{\dot H^k}^2 := \int_\R |\xi|^{2 k} \left | \widehat{f}(\xi) \right |^2 \, d \xi.
            \end{equation*}

We moreover denote by $\| f\|_{TV}$ the total variation norm of the probability density function $f$
 \[
\| f\|_{TV} = \sup \sum_{j=1}^n |f(x_j) -f(x_{j-1})|, 
 \]
 where the supremum ranges over all finite collections $x_1<x_2 < \cdots < x_n$. For a (sufficiently regular) function the total variation norm can also be represented as
  \[
\| f\|_{TV} = \int_\R |f'(x)|\, dx. 
 \]
\bigskip

\subsection{\it Linnik distribution}
The leading
example of a function which belongs to the domain of attraction of the
$\lambda$-stable law is the so-called {\it Linnik distribution} \cite{L, L2},
expressed in Fourier variable by
  \[
  \widehat p_\lambda(\xi) = \frac 1{1+ |\xi|^\lambda}.
  \]
For all $0<\lambda \leq 2$,  this function   is the
characteristic function of a symmetric probability distribution. In addition, when
$\lambda > 1$, $\widehat p_\lambda \in L^1(\R)$, which, by applying the
inversion formula, shows that $p_\lambda$ is a probability density function.

Owing to the explicit expression of its Fourier transform, it is immediate to verify that Linnik distribution of parameter $\lambda$ satisfies condition \fer{con23} with 
\[
M> 2\, \frac{k+1}\lambda.
\]
Other properties of Linnik's distributions can be extracted from its representation
as a mixture (cf. Kotz and Ostrovskii \cite{KO}). For any given pair of positive
constants $a$ and $b$, with $0 < a < b \le 2$ let $g(s, a, b)$ denote the probability
density
 \[
g(s, a, b) = \left( \frac b\pi \sin\frac{\pi a}b \right) \frac{s^{a -1}}{1+ s^{2a} +
2s^a \cos\frac{\pi a}b }, \quad 0 <s<\infty.
 \]
Then, the following equality holds \cite{KO}
 \be\label{mix}
 \widehat p_a(\xi) = \int_0^\infty \widehat p_b(\xi/s) g(s, a, b)\, ds,
 \ee
or, equivalently
 \[
 p_a(x) = \int_0^\infty p_b(sx) g(s, a, b)\, ds.
 \]
This representation allows us to generate Linnik distributions of different parameters
starting from a convenient base, typically from the Laplace distribution
(corresponding to $b =2$). In this case, since $\widehat p_2(\xi)= 1/(1+|\xi|^2)$
(alternatively $p_2(x) = e^{-|x|}/2$ in the physical space), for any $\lambda$ with
$1<\lambda <2$ we obtain the explicit representation
 \be\label{oo}
  \widehat p_\lambda(\xi) = \int_0^\infty \frac {s^2}{s^2+ |\xi|^2} \,\,g(s, \lambda, 2)\, ds,
 \ee
or, in the physical space
 \be\label{lin2}
 p_\lambda(x) = \int_0^\infty \frac s2 \, e^{-s|x|} g(s, \lambda, 2)\, ds.
  \ee
Owing to \fer{lin2} we obtain easily that, for $1<\lambda<2$,  Linnik's probability
density is a symmetric and bounded function, non-increasing and convex for $x >0$.

In \cite{To-Fish} we used representation formula \fer{lin2} to prove that Linnik distribution belongs to the domain of attraction of the fractional Fisher information. 

Moreover, by virtue of \fer{lin2}, differentiating under the integral sign when $x>0$, by the dominated convergence theorem we  obtain
 \[
 |p_\lambda'(x)|=  \left| - \int_0^\infty \frac x{|x|}\, \frac{s^2}2\, e^{-s|x|} g(s, \lambda, 2)\, ds \right| \le  \int_0^\infty  \frac{s^2}2\, e^{-s|x|} g(s, \lambda, 2)\, ds.
 \]
Since for $0<\delta < 1$
 \[
 s e^{-|x|s} = s e^{-|x|\delta s} e^{-|x|(1-\delta)s} \le \frac 1{\delta |x| e} e^{-|x|(1-\delta)s},
 \]
it holds
 \[
|p_\lambda'(x)| \le  \frac 1{\delta |x| e} p_\lambda((1-\delta)x).
 \]
In particular, since Linnik distribution belongs to the domain of attraction of the stable law, conditions \fer{dof} imply 
 \[
 \lim_{x \to \pm \infty}|x| \frac{|p_\lambda'(x)|}{p_\lambda(x)} \le \frac 1{\delta e} \, \lim_{x \to \pm \infty} \frac{p_\lambda((1-\delta)x)}{p_\lambda(x)} = \frac {(1-\delta)^{1+\lambda}}{\delta e}
 \]
Hence, as $|x| \to \infty$, ${|p_\lambda'(x)|}/{p_\lambda(x)} \sim 1/|x|$, so that
 \be\label{lin44}
\frac{|p_\lambda'(x)|}{p_\lambda(x)} \le \frac c{1+|x|}, 
 \ee
for $x \in \R$, with some positive constant $c$. This  shows that Linnik distribution has a bounded Fisher information. 

Finally, Linnik distribution satisfies all hypotheses of Theorem \ref{main-entr}, and, provided that the random variables $X_j$'s  are independent copies of a centered random variable $X$ with a Linnik distribution, the relative entropy of the sequence $T_n$ converges to zero at a rate $n^{-(2-\lambda)/(2\lambda)}$.

\bigskip \noindent

\noindent{\bf Acknowledgments:} This work has been written within the activities of
the National Group of Mathematical Physics of INDAM (National Institute of High
Mathematics). The support of the  project ``Optimal mass transportation, geometrical
and functional inequalities with applications'', financed by the Minister of
University and Research, is kindly acknowledged. 





\end{document}